\documentclass{amsart}

\usepackage{amssymb,latexsym,amsthm, amsmath}
\usepackage{graphicx}

\ifx\pdfoutput\undefined
\usepackage{hyperref}
\else
\usepackage[pdftex,colorlinks=true,linkcolor=blue,urlcolor=blue]{hyperref}
\fi

\theoremstyle{plain}
\newtheorem{theorem}{Theorem}

\newtheorem{proposition}{Proposition}

\newtheorem{lemma}{Lemma}
\theoremstyle{definition}
\newtheorem{definition}{Definition}
\newtheorem{case}{Case}
\newtheorem*{ack}{Acknowledgements}
\begin{document}
\title[Quadratic Differentials]
      {On the non-uniform hyperbolicity of the Kontsevich-Zorich cocycle for quadratic differentials} 
\author{Rodrigo Trevi\~no}
\address{Department of Mathematics\\
         The University of Maryland, College Park\\
         College Park, MD 20742}
\email{rodrigo@math.umd.edu}
\thanks{Supported by the Brin and Flagship Fellowships at the University of Maryland.}
\date{\today}
\begin{abstract}
We prove the non-uniform hyperbolicity of the Kontsevich-Zorich cocycle for a measure supported on abelian differentials which come from non-orientable quadratic differentials through a standard orienting, double cover construction. The proof uses Forni's criterion \cite{giovanni:criterion} for non-uniform hyperbolicity of the cocycle for $SL(2,\mathbb{R})$-invariant measures. We apply these results to the study of deviations in homology of typical leaves of the vertical and horizontal (non-orientable) foliations and deviations of ergodic averages.
\end{abstract}
\maketitle

It is well known that the properties of a geodesic foliation (or flow) on a flat 2-torus are completely characterized by its slope, whereas for a flat surface of higher genus the situation is far from similar. Such Riemann surface $M$ of genus greater than one with a flat metric outside finitely many singularities can be given a pair of transverse, measured foliations (in the sense of Thurston). If such foliations are orientable, Zorich \cite{zorich-leaves} detected numerically that homology classes of segments of typical leaves of the foliation deviate from the asymptotic cycle (which is defined as the limit of normalized segments of leaves) in an unprecedented way, and that the rate of deviations are given by the positive Lyapunov exponents of the Kontsevich-Zorich cocycle. Based on numerical experiments, the \emph{Kontsevich-Zorich conjecture} was formulated, which claimed that for Lebesgue-almost all classes of conformally equivalent flat metrics with orientable foliations, the exponents are all distinct and non-zero. In other words, the cocycle is non-uniformly hyperbolic and has a simple spectrum. It was also conjectured that there should be similar deviation phenomena for ergodic averages of functions in some space of functions.

The first proof of the non-uniform hyperbolicity of the Kontsevich-Zorich cocycle came from Forni \cite{forni:deviation}, but the simplicity question remained open for surfaces of genus greater than 2. The full conjecture was finally proved through methods completely different from those of Forni by Avila and Viana \cite{AvilaViana}. In \cite{forni:deviation}, a complete picture is painted on the deviations of ergodic averages along the straight line flows given by vector fields tangent to the foliations on the flat surface. The rate of divergence of such deviations are also described by all of the Lyapunov exponents of the Kontsevich-Zorich cocycle.

In this paper we study the same phenomena for the case of non-orientable foliations on flat surfaces. Although there is no vector field to speak of, we can still describe deviations of integrals of functions along leaves of the foliation. Our work has been made substantially easier by the recent criterion of Forni \cite{giovanni:criterion}, where the proof of non-uniform hyperbolicity in \cite{forni:deviation} has been condensed and generalized to apply to special $SL(2,\mathbb{R})$-invariant measures in the moduli space of abelian differentials. Note that if one has a flat surface with a non-orientable foliation, one can always pass to a double cover whereon the lift of the foliation becomes orientable. The measure on the moduli space of abelian differentials which is supported on differentials which are the pullback of non-orientable differentials is shown here to satisfy Forni's criterion. Thus most of the work is done in studying how information of the original surface is related to the information on covering surface, which is a solved problem by the works of Zorich and Forni. 

The crucial ingredient in Forni's criterion is to show there that exists a point in the support of an $SL(2,\mathbb{R})$-invariant probability measure with a completely periodic foliation whose homology classes of closed leaves span a Lagrangian subspace of the first homology space. We overcome this by a much stronger statement, showing that these special points are in fact \emph{dense} in the moduli space. We are very interested to see what the tools from generalized permutations can say to this end.

There is a canonically defined involution on the orienting double cover corresponding to the choice of orientation of the covering foliations. The involution splits the bundle on which the Kontsevich-Zorich cocycle acts into invariant and anti-invariant sub-bundles, corresponding to eigenvalues $\pm1$ of map induced by the involution. The Kontsevich-Zorich cocycle respects such splitting, defines two cocycles by its restriction to the invariant and anti-invariant sub-bundles, and thus the spectrum of the cocycle can be written as the spectrum of those two cocycles. Unlike the case for abelian differentials, \emph{the exponents which describe the deviations in homology are not the same exponents which describe the deviations of ergodic averages}, and vice-versa. Specifically, the Lyapunov exponents of the cocycle restricted to the invariant sub-bundle describe the deviations in homology of typical leaves of non-orientable foliations while the exponents of the cocycle restricted to the anti-invariant sub-bundle describe the deviations of averages of functions along leaves of non-orientable foliations. Since for any genus $g$ surface the anti-invariant sub-bundle can have arbitrarily large dimension (due to the presence of simple poles), there are non-orientable foliations on a genus $g$ surface on which the deviation of the ergodic averages along its leaves are described by arbitrarily many parameters.

Like in the original proof for abelian differentials, the proof here cannot address the question of simplicity of the Lyapunov spectrum of the cocycle. Since the restriction of the cocycle to the invariant part is equivalent to the cocycle over the moduli space of non-orientable quadratic differentials and since the anti-invariant sub-bundle describes the deviations of ergodic averages, there is no reason a-priori of why the spectrum of the cocycle over the moduli space of non-orientable quadratic differentials describes the deviations of averages of functions along leaves of non-orientable foliations defined by such quadratic differentials. Thus, unless there is some repetition of exponents across the invariant/anti-invariant division, the cocycle over the space of non-orientable quadratic differentials does not say anything about such averages. In our own numerical experiments we have found strong evidence that the spectrum of the cocycle is in fact simple.

The paper is organized as follows. In Section \ref{sec:quadratic} we review the necessary material for quadratic differentials, the double cover construction and the absolutely continuous $SL(2,\mathbb{R})$-invariant ergodic probability measure defined on each stratum of the moduli space of quadratic differentials. In Section \ref{sec:KZ} we define the Kontsevich-Zorich cocycle and state Forni's criterion for the non-uniform hyperbolicity of the cocycle. In Section \ref{sec:NUH} we show that the measure supported on abelian differentials which come from non-orientable differentials through the double cover construction satisfy Forni's criterion and thus that the Kontsevich-Zorich cocycle is non-uniformly hyperbolic with respect to that measure. In Section \ref{sec:deviations} with study the applications to deviation phenomena of homology classes and ergodic averages. Finally, in the appendix, we summarize our experimental findings of approximating numerically the Lyapunov exponents for different strata, which strongly suggest the simplicity of the cocycle.
\begin{ack}
I would like to thank my advisor, Giovanni Forni, for introducing me to this area, for suggesting this problem and for many insightful discussions during the course of this work as well as many corrections to early drafts of this paper. 
I would like to thank Rapha\"{e}l Krikorian and the Laboratoire de Probabilit\'{e}s et Mod\`{e}les Al\'{e}atoires at Universit\'{e} Paris VI, where this work was carried out, for providing excellent working conditions during my visit. I would like to thank Rafael de la Llave for many helpful discussions related to the issue of computing the Lyapunov exponents numerically.
\end{ack}
\section{Quadratic Differentials and Flat Surfaces}
\label{sec:quadratic}
Let $M$ be an orientable surface of genus $g$ and let $\Sigma_\kappa = \{p_1, \dots, p_\tau\}$ be a set of points on $M$ with $\kappa = \{n_1,\dots,n_\tau\}$, $\sum_i n_i = 4g-g$, and $n_i \in \{-1\}\cup \mathbb{N}$. $M$ is a \emph{half-translation surface} if transitions between charts on $M \backslash \Sigma_\kappa$ are given by functions of the form $\varphi(z) = \pm z + c$ for some constant $c$. On $M\backslash \Sigma_\kappa$ there is a flat metric for which the points $\Sigma_\kappa$ are singularities of order $n_i$ at $p_i$. On any such surface, we can place a pair of orthogonal foliations $\mathcal{F}^v$ and $\mathcal{F}^h$ which are defined everywhere on $M\backslash \Sigma_\kappa$ and have singularities at $\Sigma_\kappa$.

The same information is carried by a \emph{quadratic differential} on $M$. A holomorphic quadratic differential assigns to any local coordinate $z$ a quadratic form $q = \phi(z)dz^2$ where $\phi(z)$ has poles of order $n_i$ at $p_i$. If we represent it as $\phi'(w)$ with respect to another coordinate chart $w$, then it satisfies $\phi'(w) = \phi(z) (dz/dw)^2$. The foliations are then defined by integrating the distributions $\phi(z)dz^2 > 0$ and $\phi(z) dz^2<0$, respectively. In other words,
$$\mathcal{F}^v_q = \ker \mathrm{Re}\,q^{1/2} \hspace{.3in}\mbox{ and }\hspace{.3 in}\mathcal{F}^h_q = \ker \mathrm{Im}\,q^{1/2}$$
are, respectively, the \emph{vertical} and \emph{horizontal} foliations defined by a quadratic differential $q$. They are measured foliations in the sense of Thurston with respective transverse measures $|\mathrm{Re}\,q^{1/2}|$ and $|\mathrm{Im}\,q^{1/2}|$. The flat metric comes from the adapted local coordinates 
$$\zeta = \int_p^z \sqrt{\phi(w)}\, dw$$
around any point $p\in M\backslash \Sigma_\kappa$. 

If a quadratic differential is globally the square of an abelian differential, i.e., a holomorphic 1-form, then the foliations $\mathcal{F}^h_q$ and $\mathcal{F}^v_q$ are orientable and change of coordinates are given by maps of the form $\varphi(z) = z + c$. In this case we speak of a \emph{translation surface}.

Let $\mathcal{H}_g$ be the \emph{moduli space} of abelian differentials on a genus $g$ surface, which is the set of conformally equivalent classes of abelian differentials for a surface $M$ of genus $g$. The singularities in this case satisfy $\sum_i n_i = 2g-2$ and the complex dimension of this space is $2g + \tau -1$. The space $\mathcal{H}_g$ is stratified by the singularity pattern $\kappa = \{n_1,\dots,n_\tau\}$. As such, the set 
$$\mathcal{H}_\kappa = \mathcal{H}_g\cap\{\mbox{abelian differentials with singularity pattern }\kappa\}$$
 is the stratum of all abelian differentials on a genus $g$ surface with singularity pattern $\kappa = \{n_1,\dots,n_\tau\}$ and $\sum_i n_i = 2g-2$. We will interchangeably use the terms \emph{abelian differential}, \emph{quadratic differential which is a square of an abelian}, and \emph{orientable quadratic differential} since a quadratic differential $q$ with $\mathcal{F}^{v,h}_q$ orientable is necessarily the square of an abelian differential $\alpha$ and thus we can identify $q$ with $\alpha$. Note that an orientable quadratic differential has two square roots. Since they are part of the same $SL(2,\mathbb{R})$ orbit, it does not matter which square root, $+$ or $-$, we consider and thus we will by convention always pick $+$.
Thus the space of quadratic differentials which are squares of abelian is equally stratified.

The \emph{moduli space of quadratic differentials} $\mathcal{H}_g \coprod \mathcal{Q}_g$ on a Riemann surface $M$ of genus $g\geq 1$ is the quotient of the \emph{Teichmuller space of meromorphic quadratic differentials} with at most simple poles
$$\mathcal{M}_g\equiv \{\mbox{meromorphic quadratic differentials}\}/\mathrm{Diff}_0^+(M)$$
with respect to the action of the mapping class group $\Gamma_g$, where $\mathrm{Diff}_0^+$ denotes the set of orientation preserving diffeomorphisms isotopic to the identity. The subset $\mathcal{Q}_g$ denotes the set of meromorphic quadratic differentials which are \emph{not} the square of abelian differentials. These sets are equally stratified: for some singularity pattern $\kappa = \{n_1,\dots,n_\tau\}$ with $\sum_i n_i = 4g-4$, $\mathcal{Q}_\kappa$ denotes the set of quadratic differentials on a surface of genus $g$ with singularity pattern $\kappa$. Elements of $\mathcal{Q}_\kappa$ will be sometimes called \emph{non-orientable} quadratic differentials since they induce a half-translation structure on $M$, i.e., non-orientable foliations $\mathcal{F}^{v,h}_q$. Clearly it is necessary for all quadratic differentials in $\mathcal{H}_\kappa$ to have each singularity be of even order, but it is not sufficient. In fact, a result of Masur and Smillie \cite{Masur-Smillie} states that for any $\kappa=\{n_1,\dots,n_\tau\}$ with $\sum_i n_i = 4g-4$ there is a non-orientable quadratic differential $q\in \mathcal{Q}_\kappa$ with such singularity pattern with two exceptions ($\kappa = \{-1,1\}$ or $\varnothing$) in genus 1 and two exceptions ($\kappa=\{4\}$ or $\{1,2\}$) in genus two. Additionally, each stratum of $\mathcal{H}_g$ or $\mathcal{Q}_g$ is not necessarily connected. Kontsevich and Zorich \cite{KZ:connected} have achieved a complete classification of the connected components of each stratum of abelian differentials while Lanneau \cite{lanneau:connected} has classified the connected components of the strata of non-orientable differentials. The space $\mathcal{Q}_\kappa$ has complex dimension $2g + \tau - 2$.

Given any quadratic differential $q\in\mathcal{Q}_\kappa$ on a genus $g$ surface $M$ one can construct a canonical double cover $\pi_\kappa: \hat{M} \rightarrow M$ with $\hat{M}$ connected if and only if $q$ is \emph{not} the square of an abelian differential. Moreover, $\pi_\kappa^* q = \hat{\alpha}^2$, where $\hat{\alpha}$ is an abelian differential on $\hat{M}$. The construction can be summarized as follows for a non-orientable differential $q$. Let $(U_i,\phi_i)$ be an atlas for $M\backslash\Sigma_\kappa$. For any $U_i$ define $g^\pm_i(z) = \pm\sqrt{\phi_i(z)}$ on the open sets $V_i^\pm$ which are each a copy of $U_i$. The charts $\{V_i^\pm\}$ can then be glued together in a compatible way and after filling in the holes given by $\Sigma_\kappa$ we get the surface $\hat{M}$ with a quadratic differential $\hat{\alpha}^2 = \pi_\kappa^*q$. The surface $\hat{M}$ is an orienting double cover since $\mathcal{F}^{v,h}_q$ for $q\in\mathcal{Q}_g$ lifts to an orientable foliation on $\hat{M}$.

Let $\kappa$ be written as $\kappa = \{n_1,\dots, n_\nu, n_{\nu+1},\dots, n_\tau\}$ where $n_i$ is odd for $1\leq i \leq \nu$ and even for $\nu<i\leq \tau$ with $n_1\leq \dots \leq n_\nu$. Then the double cover construction gives a local embedding of  $\mathcal{Q}_\kappa$ for $\kappa = \{n_1,\dots, n_\nu, n_{\nu+1},\dots, n_\tau\}$ into $\mathcal{H}_{\hat{\kappa}}$, where
$$\hat{\kappa} = \left\{n_1+1,\dots, n_\nu + 1, \frac{1}{2}n_{\nu+1}, \frac{1}{2}n_{\nu + 1},\dots, \frac{1}{2}n_\tau, \frac{1}{2}n_\tau\right\}.$$
In the double cover construction, the preimages of the poles become marked points, the odd zeros of $q$ are critical points of $\pi_\kappa$ (ramification points) and each even singularity of $q$ has two preimages. The genus $\hat{g}$ of $\hat{M}$ can be computed by the Riemann-Hurwitz formula and satisfies $2\hat{g} = \nu+4g -2$. %

There is an involution $\sigma:\hat{M}\rightarrow\hat{M}$ mapping $\sigma:V_i^\pm\rightarrow V_i^\mp$ (that is, interchanging the points on each fiber) and clearly fixing $\pi_\kappa^{-1}\Sigma_\kappa$ as a set. Let $\hat{\Sigma}_\kappa \equiv \pi_\kappa^{-1}\Sigma_\kappa\backslash \pi_\kappa^{-1}(\{p_1,\dots,p_{\tau_{-1}}\})$, where $p_1,\dots, p_{\tau_{-1}}$ are simple poles of the quadratic differential $q$. The involution induces a splitting on the relative homology and cohomology of $\hat{M}$ into invariant and anti-invariant subspaces. Specifically, there is the following symplectic decomposition
\begin{equation}
\label{eqn:relativesplit}
H_1(\hat{M},\hat{\Sigma}_\kappa;\mathbb{R}) = H_1^+(\hat{M},\hat{\Sigma}_\kappa;\mathbb{R}) \oplus H_1^-(\hat{M},\hat{\Sigma}_\kappa;\mathbb{R})
\end{equation}
where the splitting corresponds to the eigenvalues $\pm1$ of $\sigma_*$. There is also a similar symplectic splitting in $H^1(\hat{M}, \hat{\Sigma}_\kappa;\mathbb{R})$:
$$H^1(\hat{M},\hat{\Sigma}_\kappa;\mathbb{R}) = H^1_+(\hat{M},\hat{\Sigma}_\kappa;\mathbb{R}) \oplus H^1_-(\hat{M},\hat{\Sigma}_\kappa;\mathbb{R}).$$
We will denote by $P^\pm = \frac{1}{2}(\mathrm{Id} \pm \sigma_*):H_1(\hat{M},\hat{\Sigma}_\kappa;\mathbb{Q})\rightarrow H_1^\pm(\hat{M},\hat{\Sigma}_\kappa;\mathbb{Q})$ and $P^\pm = \frac{1}{2}(\mathrm{Id} \pm \sigma^*):H^1(\hat{M},\hat{\Sigma}_\kappa;\mathbb{Q})\rightarrow H^1_\pm(\hat{M},\hat{\Sigma}_\kappa;\mathbb{Q})$ the projection to the corresponding eigenspaces in both cases.

A small neighborhood of $[\hat{\alpha}]$ in $H^1_-(\hat{M},\hat{\Sigma}_\kappa;\mathbb{C})$ gives a local coordinate chart of a regular point $q$ in $\mathcal{Q}_\kappa$. In other words, elements of $H_-^1(\hat{M},\hat{\Sigma}_\kappa;\mathbb{C})$ are abelian differentials which come from the pull-back of non-orientable quadratic differentials, $[\hat{\alpha}]\in H^1_-(\hat{M},\hat{\Sigma};\mathbb{C})$, where $\hat{\alpha} = \sqrt{\pi_\kappa^* q}$. The local charts are given by the period map $q\mapsto [\sqrt{\pi_\kappa^* q}]\in H^1_-(\hat{M},\hat{\Sigma}_\kappa;\mathbb{C})$. 

There is a canonical absolutely continuous invariant measure $\mu_\kappa$ on any stratum $\mathcal{Q}_\kappa$ of the moduli space $\mathcal{Q}_g$ defined as the Lebesgue measure on $H^1_-(\hat{M},\hat{\Sigma}_\kappa;\mathbb{C})$ normalized so that the quotient torus $H^1_-(\hat{M},\hat{\Sigma}_\kappa;\mathbb{C})/H^1_-(\hat{M},\hat{\Sigma}_\kappa;\mathbb{Z}\oplus i\mathbb{Z})$ has volume one. We remark that an analogous canonical absolutely continuous invariant measure $\nu_\kappa$ can be defined for the moduli space $\mathcal{H}_\kappa$ of squares of abelian differentials. Since the period map $q\mapsto[q^{1/2}]\in H^1(M,\Sigma_\kappa;\mathbb{C})$ gives local coordinates to $\mathcal{H}_\kappa$, it is defined in the same way and has the same properties as the measure $\mu_\kappa$ defined on strata of the moduli space of non-orientable quadratic differentials. 

The group $SL(2,\mathbb{R})$ acts on quadratic differentials $q\in(\mathcal{H}_g\coprod \mathcal{Q}_g)$ by left multiplication on the (locally defined) vector $(\mathrm{Re}\,q^{1/2},\mathrm{Im}\,q^{1/2})$. More precisely, since local coordinates are given by 
$$H^1_-(\hat{M},\hat{\Sigma}_\kappa;\mathbb{C})\cong \mathbb{R}^2\otimes H^1_-(\hat{M},\hat{\Sigma}_\kappa;\mathbb{R})$$
($H^1(M,\Sigma_\kappa;\mathbb{C})$ in the case of an orientable differential), $SL(2,\mathbb{R})$ acts on $\mathcal{Q}_\kappa$ by multiplication on the first factor. Thus, the measures $\mu_\kappa$ and $\nu_\kappa$ respectively defined on $\mathcal{Q}_\kappa$ and $\mathcal{H}_\kappa$ are $SL(2,\mathbb{R})$-invariant.

The local embedding $i_\kappa:\mathcal{Q}_\kappa\hookrightarrow \mathcal{H}_{\hat{\kappa}}$ defined by the double cover construction induces a map which maps the measure $\mu_\kappa$ to the measure 
\begin{equation}
\label{eqn:measure}
\hat{\mu}_\kappa\equiv (i_\kappa)_* \mu_\kappa
\end{equation}
on $\mathcal{H}_{\hat{\kappa}}$. Thus, the measure $\hat{\mu}_\kappa$ is singular with respect to $\nu_{\hat{\kappa}}$ since the support of $\hat{\mu}_\kappa$ is the sub-variety of $\mathcal{Q}_{\hat{\kappa}}$ which is the preimage of the subspace $H^1_-(\hat{M},\hat{\Sigma}_\kappa;\mathbb{C})$ under the period map. The measure (\ref{eqn:measure}) is clearly $SL(2,\mathbb{R})$-invariant.

\section{The Kontsevich-Zorich Cocycle}
\label{sec:KZ}
The action of diagonal subgroup
$$g_t \equiv \left\langle \left( \begin{array}{cc}
e^t & 0 \\
0 & e^{-t} \end{array} \right) : t\in\mathbb{R}\right\rangle\leq SL(2,\mathbb{R})$$
on $\mathcal{H}_\kappa$ or $\mathcal{Q}_\kappa$ is the \emph{Teichmuller flow} and plays a central role in the study of quadratic differentials. It is was proved by Masur \cite{Masur:IET} for the principal stratum $\kappa = \{1,\dots,1\}$ and then for any stratum by Veech \cite{veech-teich} that the Teichmuller flow acts ergodically on each connected component of a stratum with respect to the measure $\mu_\kappa$ (respectively, $\nu_\kappa$) when restricted to a hypersurface $\mathcal{Q}_\kappa^{(A)}\subset \mathcal{Q}_\kappa$ of quadratic differentials on a surface of area $A$ (respectively, the hypersurface $\mathcal{H}_\kappa^{(A)}\subset \mathcal{H}_\kappa$ of abelian differentials of norm $A$) and that the measure $\mu_\kappa^{(A)}\equiv \mu_\kappa|_{\mathcal{Q}_\kappa^{(A)}}$ (respectively, $\nu_\kappa^{(A)}\equiv \nu_\kappa|_{\mathcal{H}_\kappa^{(A)}}$) is finite. 

The Teichmuller flow $g_t$ admits two invariant foliations $\mathcal{W}^\pm$ on $\mathcal{H}_g$. For an abelian differential $\alpha\in\mathcal{H}_g$ , the foliations are locally defined by
\begin{eqnarray*}
  \mathcal{W}^+(\alpha)&=& \{ \alpha'\in\mathcal{H}_g : \mathrm{Im}\,\alpha' \in \mathbb{R}^+\cdot\mathrm{Im}\,\alpha\} = \{\alpha'\in\mathcal{H}_g:\mathcal{F}_{\alpha'}^h = [\mathcal{F}_\alpha^h]\} \\
\mathcal{W}^-(\alpha) &=& \{ \alpha'\in\mathcal{H}_g : \mathrm{Re}\,\alpha' \in \mathbb{R}^+\cdot\mathrm{Re}\,\alpha\} = \{\alpha'\in\mathcal{H}_g:\mathcal{F}_{\alpha'}^v = [\mathcal{F}_\alpha^v]\}.
\end{eqnarray*}
Let $\mathcal{W}_\kappa^\pm(\alpha)$ be the intersection of $\mathcal{W}^\pm(\alpha)$ with the stratum $\mathcal{H}_\kappa$. For any open set $\mathcal{U}\subset\mathcal{H}_\kappa$, define the local, invariant foliations $\mathcal{W}_{\mathcal{U}}^\pm$ as the unique, connected component of the intersection $\mathcal{W}^\pm_\kappa(\alpha)\cap\mathcal{U}$ which contains the abelian differential $\alpha\in\mathcal{U}$.

\subsection{Definition of the Cocycle}
Let $\mathcal{M}_g$ be the Teichmuller space of meromorphic quadratic differentials on a Riemann surface $M$ of genus $g>1$. The Kontsevich-Zorich cocycle $G_t$, introduced in \cite{kontsevich:hodge}, is the quotient cocycle, with respect to the mapping class group $\Gamma_g$, of the trivial cocycle
$$g_t\times \mathrm{id}:\mathcal{M}_g\times H^1(M;\mathbb{R})\longrightarrow \mathcal{M}_g\times H^1(M;\mathbb{R})$$
acting on the orbifold vector bundle
$$\mathcal{H}^1_g(M;\mathbb{R})\equiv (\mathcal{M}_g\times H^1(M;\mathbb{R}))/\Gamma_g$$
over the moduli space $Q_g\equiv (\mathcal{H}_g\coprod\mathcal{Q}_g) = \mathcal{M}_g/\Gamma_g$ of meromorphic quadratic differentials. Note that we can identify fibers of close points using the Gauss-Manin connection. The projection of the cocycle $G_t$ coincides with the Teichmuller flow $g_t$ on the moduli space $Q_g$. 

By the Oseledets Multiplicative Ergodic Theorem for linear cocycles \cite{Katok:book}, for a $g_t$-invariant probability measure $\mu$ supported on some stratum of $Q_g$ there is a decomposition $\mu$-almost everywhere of the cohomology bundle $H^1_q(M;\mathbb{R}) = E^+(q)\oplus E^-(q)\oplus E_0(q)$ where 
\begin{equation}
\label{eqn:decomp}
E^\pm(q) = E_1^\pm(q)\oplus\cdots\oplus E_{s^\pm}^\pm(q)
\end{equation}
and Lyapunov exponents $ \lambda_1^+>\dots>\lambda_{s^+}^+>0>\lambda_1^->\dots>\lambda_{s^-}^-$ which describe the exponential rate of expansion and contraction of elements in such sub-bundles under $G_t$. Elements of $E_0$ have zero exponential expansion or contraction. The dimension of each sub-bundle $E_i^\pm$ in (\ref{eqn:decomp}) is exactly the multiplicity of $\lambda_i^\pm$.

It follows from the fact that $G_t$ is a symplectic cocycle that the Lyapunov spectrum of the cocycle $G_t$, with respect to any $g_t$-invariant ergodic probability measure, is symmetric. In other words, if $\lambda$ is a Lyapunov exponent of $G_t$, so is $-\lambda$ and $\dim E^+ = \dim E^-$. Thus, the Lyapunov exponents for the Kontsevich-Zorich cocycle satisfy 
\begin{equation}
\label{eqn:exponents}
1 = \lambda_1 \geq \lambda_2 \geq \dots \geq \lambda_g \geq 0 \geq -\lambda_g = \lambda_{g+1}\geq\dots\geq\lambda_{2g-1}\geq \lambda_{2g} = -1.
\end{equation}

Since the period map identifies the tangent space of $Q_g$ to the cohomology space, there is a relationship between the Lyapunov exponents of the Kontsevich-Zorich cocycle and those of the tangent cocycle of the Teichmuller flow. Since we can express the local trivialization of the tangent bundle as $T Q_\kappa = Q_\kappa \times H^1(M,\hat{\Sigma}_\kappa;\mathbb{C})$ ($Q_\kappa \times H^1_-(M,\Sigma_\kappa;\mathbb{C})$ when $G_t$ acts on strata of non-orientable differentials), then by the isomorphism of the vector bundles
$$\mathcal{H}_\kappa^1(M,\mathbb{C})\cong \mathbb{C} \otimes H^1(M;\mathbb{R}) \cong \mathbb{R}^2\otimes H^1(M;\mathbb{R})$$
induced by the isomorphism on each fiber, the projection of $T g_t$ to the absolute cohomology can be expressed in terms of the Kontsevich-Zorich cocycle as
$$T g_t =\left( \begin{array}{cc}
e^t & 0 \\
0 & e^{-t} \end{array} \right) \otimes G_t \hspace{.3in}\mbox{acting on}\hspace{.3in}\mathbb{R}^2\otimes H^1(M;\mathbb{R}).$$

Thus, the Lyapunov exponents of the Teichmuller flow with respect to the canonical, absolutely continuous measures $\mu_\kappa$ or $\nu_\kappa$ can be written as
\begin{eqnarray*}
  2 &\geq& (1+\lambda_2)\geq\dots\geq (1+\lambda_g)\geq \overbrace{1=\dots=1}^{\tau -1} \geq (1-\lambda_g) \\ \nonumber
    &\geq& \dots \geq (1-\lambda_2)\geq 0\geq -(1-\lambda_2)\geq\dots\geq-(1-\lambda_g) \\
    &\geq&  \underbrace{-1=\dots=-1}_{\tau-1}\geq-(1+\lambda_g)\geq\dots\geq-(1+\lambda_2)\geq -2. \nonumber
\end{eqnarray*}
where the $\tau-1$ trivial exponents come from cycles relative to $\Sigma_\kappa$.

The trivial exponents of the tangent cocycle $T g_t$ are neglected by $G_t$ since the bundle $\mathcal{H}_g^1$ neglects cocycles in $H^1(M,\Sigma_\kappa;\mathbb{C})$ which are dual to cycles relative to $\Sigma_\kappa$, from which we get such trivial exponents. The non-uniform hyperbolicity of the tangent cocycle for the Teichmuller flow is equivalent to the spectral gap of the Kontsevich-Zorich cocycle, i.e., that $\lambda_1>\lambda_2$. This was proved by Veech \cite{veech-teich} for the canonical measure and then by Forni in \cite{forni:deviation} for any Teichmuller invariant ergodic probability measure $\mu$ in $\mathcal{H}_g$.

Let $\hat{q}=i_\kappa(q)\in\mathcal{H}_{\hat{\kappa}}$ be an orientable quadratic differential which is obtained by the double cover construction. The splitting $H^1(\hat{M};\mathbb{R}) = H^1_+\oplus H^1_-$ is equivariant with respect to the Gauss-Manin connection. Since both $H^1_+$ and $H^1_-$ are symplectic subspaces, the restriction of the Kontsevich-Zorich cocycle to either the invariant or anti-invariant sub-bundles defines another symplectic cocycle. Thus we get symmetric Lyapunov spectra
$$\lambda_1^+ \geq \lambda_2^+ \geq \dots \geq \lambda_g^+ \geq 0 \geq -\lambda_g^+ = \lambda_{g+1}^+\geq\dots\geq \lambda_{2g}^+$$
and
$$\lambda_1^- \geq \lambda_2^- \geq \dots \geq \lambda_{g+n-1}^- \geq 0 \geq -\lambda_{g+n-1}^- = \lambda_{g+n}^-\geq\dots\geq \lambda_{2g+2n-2}^-$$
which are, respectively, the Lyapunov exponents of the symplectic cocycles of the invariant and anti-invariant sub-bundles.

It follows from the double cover construction that the action of $g_t$ commutes with $i_\kappa$. Moreover, since $\pi^*_\kappa$ is an isomorphism between $H^1(M;\mathbb{R})$ and $H^1_+(\hat{M};\mathbb{R})$,
\begin{equation}
\label{eqn:commute}
(i_\kappa\times\pi^*_\kappa)\circ(g_t|_{\mathcal{Q}_\kappa}\times\mathrm{id}) = (g_t|_{\mathcal{H}_{\hat{\kappa}}}\times\mathrm{id})\circ(i_\kappa\times\pi^*_\kappa),
\end{equation}
and thus the Lyapunov spectrum of the Kontsevich-Zorich cocycle on the bundle over $i_\kappa(\mathcal{Q}_\kappa)$ restricted to the invariant sub-bundle is the same as the Lyapunov spectrum of the Kontsevich-Zorich cocycle on the bundle over $\mathcal{Q}_\kappa$.

\subsection{A Criterion for Non-Uniform Hyperbolicity}

The non-uniform hyperbolicity of the Kontsevich-Zorich cocycle for the canonical, absolutely continuous measure on $\mathcal{H}_\kappa$ was first proved by Forni in \cite{forni:deviation}. Recently, the proof of such result has been generalized in \cite{giovanni:criterion} to apply to \emph{any} $SL(2,\mathbb{R})$-invariant ergodic probability measure on $\mathcal{H}_\kappa$ which have special points in their support. In this section we review the necessary material to state Forni's criterion.

\begin{definition}
An open set $\mathcal{U}\subset\mathcal{H}_\kappa$ is of \emph{product type} if for any $(\omega^+,\omega^-)\in\mathcal{U}\times\mathcal{U}$ there is an abelian differential $\omega\in\mathcal{U}$ and an open interval $(a,b)\subset\mathbb{R}$ such that
$$\mathcal{W}^+_\mathcal{U}(\omega^+)\cap\mathcal{W}^-_\mathcal{U}(\omega^-) = \bigcup_{t=a}^b\{g_t(\omega)\}.$$
\end{definition}
Define for an open subset $\mathcal{U}\subset\mathcal{H}_\kappa$ of product type and any subset $\Omega\subset\mathcal{U}$, 
$$\mathcal{W}^\pm_\mathcal{U}(\Omega)\equiv \bigcup_{\omega\in\Omega}\mathcal{W}^\pm_\mathcal{U}(\omega).$$

\begin{definition}
A Teichmuller-invariant measure $\mu$ supported on $\mathcal{H}_\kappa$ has \emph{product structure} on an open subset $\mathcal{U}\subset\mathcal{H}_\kappa$ of product type if for any two Borel subsets $\Omega^\pm\subset\mathcal{U}$,
$$\mu(\Omega^-)\neq 0\mbox{ and }\mu(\Omega^+)\neq 0 \hspace{.3 in}\mbox{ implies } \hspace{.3 in}\mu\left(\mathcal{W}^+_\mathcal{U}(\Omega^+)\cap\mathcal{W}^-_\mathcal{U}(\Omega^-)\right)\neq 0.$$
A Teichmuller-invariant measure $\mu$ on $\mathcal{H}_\kappa$ has \emph{local product structure} if \emph{every} abelian differential $\omega\in\mathcal{H}_\kappa$ has an open neighborhood $\mathcal{U}_\omega\subset\mathcal{H}_\kappa$ of product type on which $\mu$ has a product structure.
\end{definition}

\begin{definition}
The \emph{homological dimension} of a completely periodic measured foliation $\mathcal{F}$ on an orientable surface $M$ of genus $g>1$ is the dimension of the isotropic subspace $\mathcal{L}(\mathcal{F})\subset H_1(M;\mathbb{R})$ generated by the homology classes of closed leaves of the foliation $\mathcal{F}$. A completely periodic measured foliation $\mathcal{F}$ is \emph{Lagrangian} if $\dim\mathcal{L}(\mathcal{F}) = g$, that is, if the subspace in $H_1(M;\mathbb{R})$ generated by classes of closed leaves of the foliation is a Lagrangian subspace with respect to the intersection form.
\end{definition}

A periodic measured foliation is Lagrangian if and only if it has $g$ distinct leaves $\gamma_1,\dots,\gamma_g$ such that $\tilde{M} = M\backslash(\gamma_1\cup\cdots\cup\gamma_g)$ is homeomorphic to a sphere minus $2g$ paired, disjoint disks.

\begin{definition}
\label{def:cuspidal}
A Teichmuller-invariant probability measure on a stratum $\mathcal{H}_\kappa$ is \emph{cuspidal} if it has local product structure and its support contains a holomorphic differential with a completely periodic horizontal or vertical foliation. The \emph{homological dimension} of a Teichmuller-invariant measure is the maximal homological dimension of a completely periodic vertical or horizontal foliation of a holomorphic differential in its support. A Teichmuller-invariant probability measure is \emph{Lagrangian} if it has maximal homological dimension, i.e., its support contains a holomorphic differential whose vertical or horizontal foliation is Lagrangian.
\end{definition}

As far as the author is aware, all known $SL(2,\mathbb{R})$-invariant measures on $\mathcal{H}_g$ (and in particular the measure (\ref{eqn:measure})) are cuspidal. We can now state Forni's criterion for the non-uniform hyperbolicity of the Kontsevich-Zorich cocycle with respect to some $SL(2,\mathbb{R})$-invariant measure.

\begin{theorem}[Forni's Criterion \cite{giovanni:criterion}]
\label{thm:criterion}
Let $\mu$ be an $SL(2,\mathbb{R})$-invariant ergodic probability measure on a stratum $\mathcal{H}_\kappa\subset\mathcal{H}_g$ of the moduli space of abelian differentials. If $\mu$ is cuspidal Lagrangian, the Kontsevich-Zorich cocycle is non-uniformly hyperbolic $\mu$-almost everywhere. The Lyapunov exponents $\lambda_1^\mu\geq\cdots\geq\lambda_{2g}^\mu$ form a symmetric subset of the real line in the following way:
$$1=\lambda_1^\mu>\lambda_2^\mu\geq\cdots\geq\lambda_g^\mu> 0 > \lambda_{g+1}^\mu = -\lambda_g^\mu \geq \cdots\geq \lambda_{2g-1}^\mu = -\lambda_2^\mu > \lambda_{2g}^\mu = -1.$$
\end{theorem}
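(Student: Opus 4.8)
The plan is to follow the architecture of \cite{giovanni:criterion}. Three of the four assertions are formal or classical. The symmetry $\lambda_{2g+1-j}^\mu=-\lambda_j^\mu$ is forced by the symplecticity of $G_t$ (so in particular $\lambda_g^\mu\ge 0$). The value $\lambda_1^\mu=1$ is the combination of two standard facts: the $G_t$-invariant \emph{tautological plane} $\mathbb{R}\,[\mathrm{Re}\,\omega]\oplus\mathbb{R}\,[\mathrm{Im}\,\omega]\subset H^1(M;\mathbb{R})$ carries the exponents $\pm 1$, while the elementary bound $\|G_t c\|_{g_t\omega}\le e^{|t|}\|c\|_\omega$ for the Hodge norm gives $\lambda_1^\mu\le 1$. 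The strict gap $\lambda_1^\mu>\lambda_2^\mu$ is Forni's spectral-gap theorem \cite{forni:deviation}, valid for every Teichmuller-invariant ergodic probability measure. Thus everything reduces to the \emph{non-uniform hyperbolicity} $\lambda_g^\mu>0$, equivalently the triviality of the neutral Oseledets bundle $E_0(q)$ for $\mu$-a.e.\ $q$; since $E^+(q)$ and $E^-(q)$ are symplectically paired of equal dimension and $\dim H^1(M;\mathbb{R})=2g$, this is in turn equivalent to $\dim E^+(q)\ge g$ for $\mu$-a.e.\ $q$.

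First I would install Forni's analytic machinery: the Hodge norm $\|\cdot\|_\omega$ on $H^1(M;\mathbb{R})$ attached to the complex structure of $\omega$, and the variational calculus of $\|c\|_{\omega_t}$ along $\omega_t=g_t\omega$. If $\mathrm{Re}\,\phi_{\omega_t}$ is the $\omega_t$-harmonic representative of a fixed class $c$, with $\phi_{\omega_t}$ holomorphic, the first variation reads
$$\frac{d}{dt}\log\|c\|_{\omega_t}^2\;=\;-\,\frac{2\,\mathrm{Re}\,B_{\omega_t}(\phi_{\omega_t})}{\|c\|_{\omega_t}^2},\qquad B_\omega(\phi)=\int_M\Bigl(\frac{\phi}{\omega}\Bigr)^{\!2}\frac{i}{2}\,\omega\wedge\bar\omega ,$$
where Forni's bilinear form $B_\omega$ obeys $|B_\omega(\phi)|\le\|c\|_\omega^2$, with equality only on $\mathbb{C}\cdot\omega$. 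Feeding the existence of a zero Lyapunov exponent into this identity and its second-order companion --- via Oseledets regularity, Birkhoff's theorem along $g_t$, and the local product structure of $\mu$ to combine the forward and backward analyses --- produces a dichotomy: either $\lambda_g^\mu>0$, or $\mu$ is carried by a closed, flow-invariant \emph{degeneracy locus} $\mathcal D\subset\mathcal H_\kappa$ on which $B_\omega$ admits an isotropic direction transverse to $\mathbb{C}\cdot\omega$ (equivalently, a locus supporting a nonzero cohomology class whose Hodge norm remains bounded along its entire $g_t$-orbit). This part is essentially the argument of \cite{forni:deviation}, re-packaged.

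Second, I would use the \emph{cuspidal Lagrangian} hypothesis to exclude the second alternative. Let $\omega_0\in\mathrm{supp}\,\mu$ have a completely periodic horizontal foliation which is Lagrangian, so $M$ has $g$ disjoint closed leaves $\gamma_1,\dots,\gamma_g$ with $M\setminus(\gamma_1\cup\cdots\cup\gamma_g)$ a sphere minus $2g$ paired disjoint disks; their Poincar\'e duals span a $g$-dimensional subspace $\mathcal L^\ast\subset H^1(M;\mathbb{R})$, flat along the $g_t$-orbit of $\omega_0$ for the Gauss-Manin connection. On this very concrete decomposition one can estimate the Hodge norm and the form $B_\omega$ along $g_t\omega_0$ as the cylinders degenerate, and check that $B_{g_T\omega_0}$ is \emph{quantitatively non-degenerate} on $\mathcal L^\ast$ for $|T|$ large; in particular $g_T\omega_0\notin\mathcal D$. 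Since $\mathcal D$ is closed and flow-invariant while $\omega_0\in\mathrm{supp}\,\mu$, this forces $\mu(\mathcal D)<1$, hence $\mu(\mathcal D)=0$ by ergodicity of $g_t$, and the dichotomy gives $\lambda_g^\mu>0$. Equivalently, in dynamical language: the analysis at $\omega_0$ exhibits $g$ linearly independent cohomology directions with the hyperbolic behaviour characteristic of the unstable bundle along the orbit of $\omega_0$, and transporting this to $\mu$-typical orbits through the local product structure of $\mu$ and the recurrence of the Teichmuller flow yields $\dim E^+(q)\ge g$ for $\mu$-a.e.\ $q$, which closes the argument by the dimension count above.

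The hard part is the technical core concealed by both formulations: (i) the uniform Hodge-theoretic estimates at the multi-cylinder degeneration of $\omega_0$ --- controlling $\|\cdot\|_{g_T\omega_0}$ and $B_{g_T\omega_0}$ on all of $\mathcal L^\ast$ without cancellation among the cylinders, and identifying which directions realize the extremal behaviour --- and (ii) the genuine use of the local product structure of $\mu$, together with recurrence and ergodicity of $g_t$, to pass from the orbit-wise statement at the single, $\mu$-null point $\omega_0$ to an almost-everywhere statement about the Oseledets bundles, i.e.\ about $\mu(\mathcal D)$. These two ingredients are precisely the contributions of \cite{giovanni:criterion}, built on the Hodge-theoretic apparatus of \cite{forni:deviation}.
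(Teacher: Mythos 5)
This statement is not proved in the paper: it is recalled verbatim as a black-box citation to Forni's criterion paper \cite{giovanni:criterion}, and all the paper does with it is verify its hypotheses (local product structure and the Lagrangian cuspidal condition) for the specific measure $\hat\mu_\kappa$, via Proposition~\ref{prop:density}. So there is no ``paper's proof'' to compare against; what you have written is a sketch of the external proof, which the paper deliberately does not reproduce.

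As a sketch of Forni's argument your outline is broadly faithful: the reductions to $\lambda_g^\mu>0$ via symplecticity, the tautological plane, and the spectral gap theorem are all standard and correctly placed, and the two ``hard parts'' you flag at the end are indeed the content of \cite{giovanni:criterion}. The one place you should be careful is your description of the degeneracy locus $\mathcal D$. You offer two characterizations as if interchangeable --- (i) a locus where $B_\omega$ admits an isotropic direction transverse to $\mathbb{C}\cdot\omega$, and (ii) a locus supporting a nonzero class whose Hodge norm stays bounded along the entire $g_t$-orbit --- and then invoke that $\mathcal D$ is \emph{closed and flow-invariant} to run the ergodicity argument. Characterization (i) is not the right local condition: $B_\omega$ is a complex symmetric bilinear form and always has isotropic vectors once the genus is $\geq 2$; the relevant degeneracy is a rank/eigenvalue defect of the derived (positive semi-definite) Hermitian form, which \emph{is} a closed condition by upper semicontinuity of rank. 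Characterization (ii) is an Oseledets-type, orbit-global statement and is \emph{not} obviously closed; if you define $\mathcal D$ that way the ``closed + invariant + misses $\omega_0$'s orbit $\Rightarrow$ $\mu(\mathcal D)=0$'' step does not go through without more work. In Forni's actual proof the closedness comes from the local rank characterization, and the local product structure of $\mu$ is what upgrades non-degeneracy at the single (a priori $\mu$-null) Lagrangian point $\omega_0$ to non-degeneracy on a set of positive $\mu$-measure; your final paragraph gestures at this but mixes it with the ergodicity argument, which on its own is insufficient. None of this affects the paper under review, which only consumes the statement.
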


The spectral gap $\lambda_1^\mu >\lambda_2^\mu$ is an easier result than the entire proof of non-uniform hyperbolicity. In fact, in \cite{forni:deviation} the spectral gap was proved for \emph{any} $g_t$-invariant probability measure. It follows from this result that both $E^+_1(q)$ and $E^-_{2g}(q)$ in the decomposition (\ref{eqn:decomp}) are one-dimensional. In fact, for an Oseledets-regular point $q\in\mathcal{H}_\kappa$, $E^+_1(q) = [\mathrm{Re}\,q^{1/2}]\cdot\mathbb{R}$ and $E^-_{2g}(q) = [\mathrm{Im}\,q^{1/2}]\cdot\mathbb{R}$, and their dual bundles (in the sense of Poincar\'{e} duality) in $H_1(M;\mathbb{R})$ are generated, respectively, by the Schwartzman asymptotic cycles (which will be defined in section \ref{sec:deviations}) for the horizontal and vertical foliations, $\mathcal{F}^{v,h}_q$.

\section{Non-Uniform Hyperbolicity for Quadratic Differentials}
\label{sec:NUH}
In this section we apply Forni's criterion (Theorem \ref{thm:criterion}) to the $SL(2,\mathbb{R})$-invariant measure (\ref{eqn:measure}) on $\mathcal{H}_{\hat{\kappa}}$ coming from non-orientable quadratic differentials by the double cover construction detailed in section \ref{sec:quadratic}. The non-trivial property to show is that the support of such measure in every stratum contains a completely periodic quadratic differential $q$ on $M$ whose vertical or horizontal foliation lifts to a Lagrangian foliation on $\hat{M}$, since for any surface $M$ of genus $g$, the anti-invariant space $H_1^-(\hat{M};\mathbb{R})$ can have arbitrarily large dimension. In this section we will prove a much stronger statement, Proposition \ref{prop:density}, which states that such quadratic differentials are dense in every stratum $\mathcal{Q}_\kappa$, which will suffice in order to apply Theorem \ref{thm:criterion}.

Following \cite[\S 4.1]{lanneau:parity}, we make some remarks about the structure of $\pi_\kappa:\hat{M}\rightarrow M$ and the canonical basis on homology one can construct from it. Note that 
$$\pi_\kappa:\hat{M}\backslash\{\mbox{ramification points}\}\rightarrow M\backslash\{\mbox{odd singularities}\}$$
is a regular covering space with group of deck transformations $\mathbb{Z}_2$. As such, and denoting $\dot{M} = M\backslash\{\mbox{odd singularities}\}$, the monodromy representation $\pi_1(\dot{M})\rightarrow \mathbb{Z}_2$ factors through $H_1(\dot{M};\mathbb{Z})$ (and even through $H_1(\dot{M};\mathbb{Z}_2)$) since $\mathbb{Z}_2$ is Abelian. Let $m:H_1(\dot{M};\mathbb{Z}_2)\rightarrow \mathbb{Z}_2$ denote such map. Starting with a standard symplectic basis $\{a_1,b_1,\dots,a_g,b_g\}$ for $H_1(M;\mathbb{Z}_2)$ with $a_i\cap b_i = 1$ and all other intersections zero, it is possible to construct the following (symplectic) basis on $H_1(\hat{M};\mathbb{Z})$, using that $[\gamma]\in\ker(m)$ if and only if the loop $\gamma$ lifts to two loops on $\hat{M}$.

Suppose that $M$ has no singularities of odd degree. In this case $\pi_\kappa:\hat{M}\rightarrow M$ is a regular covering space and as such $\sigma$ has no fixed points and the holonomy of a curve depends only on its homology class. Starting with a standard symplectic basis $\{\bar{a}_1,\bar{b}_1,\dots,\bar{a}_g,\bar{b}_g\}$ of $H_1(M;\mathbb{Z})$ we can make a change of basis to obtain a ``nice'' basis of $H_1(\hat{M};\mathbb{Z})$. By assumption, $q$ is not the square of an Abelian differential, so there is at least one cycle of our symplectic basis with non-trivial monodromy, which we can assume is $\bar{b}_g$. For $1\leq i < g$, let $a_i = \bar{a}_i + \bar{b}_g$ if $m(\bar{a}_i) = 1$ and otherwise $a_i = \bar{a}_i$, and construct $b_i$ in a similar way. Then any loop $\gamma_{a_i}$ or $\gamma_{b_i}$ representing the new basis $\{a_i,\,b_i\}$ lifts to two disjoint loops $\gamma^\pm_{a_i}$ and $\gamma^\pm_{b_i}$ for $1\leq i < g$ with $[\gamma^\pm_{a_i}]=a^\pm_i$ and $[\gamma^\pm_{b_i}]=b^\pm_i$.  We can assign the labels $\pm$ such that $a_i^+\cap b_i^+ = a_i^-\cap b_i^- = 1$ and all other intersections are zero for $1\leq i < g$. Because of the prescribed symplectic structure, $P^\pm a_i^+ \neq 0 \neq P^\pm b_i^+$ for $1\leq i <g$ and moreover they span a symplectic subspace of $H_1(\hat{M};\mathbb{Q})$ of dimension $4g-4$ (codimension 2).

Let $b^+_g$ be homology class of a lift of a curve representing $b_g$ on $M$ and similarly for a lift $a^+_g$ of $a_g$, independent of the value of $m(a_g)$. Then
$$\{a_1^+,b_1^+,a_1^-,b_1^-,\dots, a_{g-1}^+,b_{g-1}^+,a_{g-1}^-,b_{g-1}^-,a^+_g,b^+_g\}$$
is a basis of $H_1(\hat{M};\mathbb{Z})$. Moreover we have $a_i^- = \sigma_*a_i^+$ and $b_i^- = \sigma_*b_i^+$ for $1\leq i < g$, and $\sigma_*b^+_g = b^+_g$. Furthermore we have $\pi_{\kappa*}a^\pm_i = a_i$ and $\pi_{\kappa*}b^\pm_i = b_i$.

The cycles on $H_1(\hat{M};\mathbb{Z})$ which come through modified cycles on $H_1(M;\mathbb{Z})$ can be modified by subtracting $b_g^+$ to give a symplectic basis for $H_1(\hat{M};\mathbb{Z})$, which we can explicitly write in terms of the invariant and anti-invariant subspaces in homology:
\begin{eqnarray*}
H_1^+(\hat{M};\mathbb{Q}) &=& \langle P^+a_1^+,P^+b_1^+,\dots,P^+a^+_g,P^+b^+_g\rangle \\
H_1^-(\hat{M};\mathbb{Q}) &=& \langle P^-a_1^-,P^-b_1^-,\dots,P^-a^-_{g-1},P^-b^-_{g-1}\rangle
\end{eqnarray*}
In these coordinates, $P^+a^+_i\cap P^+b^+_i = P^-a^-_i \cap P^-b^-_i \neq 0$ and all other intersections are zero. Thus $H_1^+$ and $H_1^-$ are symplectically orthogonal.

Suppose that $M$ has some singularities of odd order, which by necessity has to be an even number of them, $2n$, and label the odd singularities $p_1,\dots, p_{2n}$. Consider a standard symplectic basis $a_1,b_1,\dots,a_g,b_g$ of $H_1(M;\mathbb{R})$. Note that two loops representing homology classes can be different in $H_1(\dot{M};\mathbb{Z})$ while being homologous in $H_1(M;\mathbb{Z})$. This happens, for example, when the loops have different monodromy. Thus any loop representing a basis element of $H_1(M;\mathbb{Z})$ with non-trivial monodromy can be modified slightly to change its monodromy while staying in the same homology class. This is done by ``taking a detour'' to go around an odd singularity, say $p_{2n}$. By making such modifications to representatives of $a_i$ and $b_i$ we can suppose that every loop representing a basis element of $H_1(M;\mathbb{Z})$ lifts to two loops on $\hat{M}$, $\gamma_{a^\pm}$ and $\gamma_{b^\pm}$ with $[\gamma_{a^\pm_i}]=a^\pm_i$ and $[\gamma_{b^\pm_i}]=b^\pm_i$. By considering the intersections of curves representing the basis of $H_1(M)$ and their lifts, we can assign the $\pm$ labels to the lifts so that we get a collection of cycles in $H_1(\hat{M};\mathbb{Q})$
\begin{equation}
\label{eqn:basispoles}
\{a_1^+,b_1^+,a_1^-,b_1^-,\dots, a_{g}^+,b_{g}^+,a_g^-,b_g^-\}
\end{equation}
such that $a_i^+\cap b_i^+ = a_i^-\cap b_i^- = 1$ for $i\leq i \leq g$ and all other intersections are zero. Moreover we have $a_i^- = \sigma_*a_i^+$ and $b_i^- = \sigma_*b_i^+$ for $1\leq i \leq g$. Furthermore we have $\pi_{\kappa*}a^\pm_i = a_i$ and $\pi_{*\kappa}b^\pm_i = b_i$. Because of the prescribed symplectic structure, $P^\pm a^+_i \neq 0 \neq P^\pm b^+_i$ for $1\leq i \leq g$ and these cycles span a $4g$-dimensional symplectic subspace of $H_1(\hat{M};\mathbb{Q})$. Thus we can explicitly write the basis for the invariant and anti-invariant subspaces in homology:
\begin{eqnarray}
\label{splitting}
H_1^+(\hat{M};\mathbb{Q}) &=& \langle P^+a_1^+,P^+b_1^+,\dots,P^+a^+_g,P^+b^+_g\rangle \nonumber \\
H_1^-(\hat{M};\mathbb{Q}) &=& \langle P^-a_1^-,P^-b_1^-,\dots,P^-a^-_g,P^-b^-_g\rangle
\end{eqnarray}
with the corresponding intersections, making them symplectically orthogonal. By the Riemann-Hurwitz formula, $\dim H_1^-(\hat{M},\mathbb{R}) = 2g + 2n -2$, so in the case of $n=1$ we have constructed a basis for the homology of the covering surface. For $n>1$, the other $2n - 2$ cycles on $\hat{M}$ which are basis elements of $H_1(\hat{M};\mathbb{R})$ are constructed in a way reminiscent of the way one constructs basis elements on a hyperelliptic surface. 

Consider a series of paths $l_i$ joining $p_i$ to $p_{i+1}$ for $1\leq i \leq 2n-2$. We can chose these paths so that they have no intersection with the cycles $a_i$ or $b_i$ and that the line $\bigcup_{i = 1}^{2n-2} l_i$ does not have self intersections. For $\varepsilon$ sufficiently small, take an $\varepsilon$-tubular neighborhood $E_i$ of $l_i$ and consider the oriented boundary $\partial E_i$ which we can identify with a cycle $\bar{c}_i$. This cycle clearly has trivial monodromy and, as such, lifts to two different paths on $\hat{M}$. Pick one of these and label it $c_i$. Thus we get the cycles $c_1,\dots,c_{2n-2}$ on $\hat{M}$ with $c_j\cap c_{j+1} = 1$ for $1\leq j \leq 2n -3$ and $\sigma_* c_j = -c_j$. Let $\mathcal{C}\subset H_1(\hat{M};\mathbb{Z})$ be the subspace spanned by the cycles $c_i$. This space is symplectically orthogonal to the subspaces spanned by $P^\pm a^\pm_i$ and $P^\pm b^\pm_i$. The subspace $\mathcal{C}$ can be thought of absolute homology classes of the covering surface which are represented by lifts of curves which are homologous to zero. We will denote by $P^\mathcal{C}:H_1(\hat{M};\mathbb{R})\rightarrow \mathcal{C}$ the projection of a cycle to $\mathcal{C}$.

For the case when $q$ has at least two odd singularities, we adopt from now on the following notation. Let $H_1^-(\hat{M};\mathbb{Z}) = \hat{H}_1^-(\hat{M};\mathbb{Z})\oplus\mathcal{C}$ be the anti-invariant eigenspace, i.e., the projection $P^-H_1(\hat{M};\mathbb{Z})$. Then we can write the homology of the covering surface, which represents the (symplectic) orthogonal splitting, as:
$$H_1(\hat{M};\mathbb{R}) = H_1^+(\hat{M};\mathbb{R})\oplus \hat{H}_1^-(\hat{M};\mathbb{R})\oplus \mathcal{C}.$$
Similarly, there is a splitting in cohomology:
$$H^1(\hat{M};\mathbb{R}) = H^1_+(\hat{M};\mathbb{R})\oplus \hat{H}^1_-(\hat{M};\mathbb{R})\oplus \mathcal{C}^*.$$
Note that when $n>1$, $\hat{H}_1^-(\hat{M};\mathbb{R})$ is \emph{not} the entire anti-invariant eigenspace, but the projection to the negative eigenspace of the cycles on $\hat{M}$ which come from basis elements of $H_1(M;\mathbb{Z})$. 

\begin{definition}
A measured foliation $\mathcal{F}$ on a compact surface is called \emph{periodic} if the set of non-closed leaves has measure zero. A quadratic differential whose horizontal foliation is periodic is called a \emph{periodic quadratic differential}. A \emph{saddle connection} is a leaf of the foliation joining two singularities.
\end{definition}

In the literature, periodic quadratic differentials also go by the name of \emph{Strebel} quadratic differentials.

We now relate structure of periodic foliations induced by quadratic differentials to the above discussion of the relationship between the homology of the half-translation surface $M$ carrying a quadratic differential and its orienting double cover $\hat{M}$. By removing saddle connections and singularities, a half-translation surface carrying a periodic quadratic differential $q$ decomposes $M$ into the disjoint union of cylinders $\{c_q^1,\dots, c_q^k\}$ composed of closed leaves of the foliation. Each cylinder $c_q^i$ has a waistcurve $|a_q^i|$ whose homology class $a_q^i = [|a_q^i|]$ represents the homology class of all other closed leaves in $c_q^i$.

\begin{lemma}
\label{lem:monodromy}
Let $q$ be a periodic quadratic differential on $M$. If $0\neq a_q^i\in H_1(M;\mathbb{Z})$, then $m(a_q^i) = 0$, i.e., the lift of $c_q^i$ consists of two cylinders on $\hat{M}$.
\end{lemma}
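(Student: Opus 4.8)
The plan is to argue by contradiction, using that the lifted foliation on $\hat M$ is orientable whereas the deck involution $\sigma$ reverses that orientation. Recall first that $m(a_q^i)$ stands for $m\big([\,|a_q^i|\,]\big)$ for a waistcurve $|a_q^i|$ of $c_q^i$; being a closed leaf of $\mathcal{F}^h_q$, this curve lies in the smooth locus $M\backslash\Sigma_\kappa\subset\dot M$, so its class in $H_1(\dot M;\mathbb{Z}_2)$ is a legitimate argument for $m$. Suppose, for contradiction, that $m(a_q^i)=1$.

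The key observation is that $\sigma$ reverses the orientation of the horizontal foliation $\mathcal{F}^h_{\hat\alpha}=\pi_\kappa^{-1}(\mathcal{F}^h_q)$ of $\hat\alpha=\sqrt{\pi_\kappa^\ast q}$. Indeed $\sigma^\ast\hat\alpha=-\hat\alpha$ — this is precisely the statement $[\hat\alpha]\in H^1_-(\hat M,\hat\Sigma_\kappa;\mathbb{C})$ — and the leaves of $\mathcal{F}^h_{\hat\alpha}=\ker\mathrm{Im}\,\hat\alpha$ carry the orientation in which $\mathrm{Re}\,\hat\alpha$ increases; since $\sigma$ preserves this foliation as a foliation but sends $\mathrm{Re}\,\hat\alpha$ to $-\mathrm{Re}\,\hat\alpha$, it reverses the orientation of every leaf.

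Now if $m(a_q^i)=1$ then $\hat\gamma:=\pi_\kappa^{-1}(|a_q^i|)$ is connected, a single circle double-covering $|a_q^i|$, and $\sigma|_{\hat\gamma}$ is the nontrivial deck transformation of this connected double cover, hence acts freely on $\hat\gamma$. (Consistently, $|a_q^i|$ misses all singularities, so $\hat\gamma$ contains no ramification point, and the ramification points are exactly the fixed points of $\sigma$.) But $\hat\gamma$ is a closed leaf of $\mathcal{F}^h_{\hat\alpha}$, so by the previous paragraph $\sigma|_{\hat\gamma}$ reverses the orientation of the circle $\hat\gamma$; and an orientation-reversing self-homeomorphism of a circle always has a fixed point (lift it to a degree $-1$ homeomorphism of $\mathbb{R}$ and apply the intermediate value theorem to $\tilde f(x)-x$). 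This contradicts freeness, so $m(a_q^i)=0$. Since all the closed leaves filling $c_q^i$ are freely homotopic in $M\backslash\Sigma_\kappa$ to $|a_q^i|$, they too have trivial monodromy, and because $c_q^i$ lies in the smooth locus $\pi_\kappa$ restricts there to an unramified double cover of the annulus $c_q^i$ whose induced map on $\pi_1(c_q^i)$ is trivial; hence $\pi_\kappa^{-1}(c_q^i)$ consists of two disjoint annuli, each a flat cylinder of closed horizontal leaves on $\hat M$.

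I do not anticipate a real obstacle: the only delicate points are the sign $\sigma^\ast\hat\alpha=-\hat\alpha$ and the remark that a closed leaf automatically avoids the fixed-point set of $\sigma$, both immediate from the double-cover construction. One should note that the argument never uses the hypothesis $a_q^i\neq 0$ in $H_1(M;\mathbb{Z})$: it shows that \emph{every} closed leaf of a periodic quadratic differential has trivial monodromy, the nonvanishing of the homology class mattering only for the way the lemma is applied in the proof of Proposition~\ref{prop:density}.
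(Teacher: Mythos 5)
Your proof is correct, and it takes a cleaner and more direct route than the paper's. Both arguments turn on the same key observation --- since $\sigma^*\hat\alpha = -\hat\alpha$, the involution reverses the natural orientation of horizontal leaves --- but they exploit it differently. The paper applies the Lefschetz fixed point theorem to the two-dimensional cylinder $\hat c_q^i$, obtaining two fixed points, then analyses whether these can be preimages of odd zeros or of simple poles, and finally (in the pole case) deduces from the geometry of a saddle connection between two poles that the waistcurve would be null-homologous, contradicting $a_q^i\neq 0$. You instead work directly on the one-dimensional waistcurve $\hat\gamma=\pi_\kappa^{-1}(|a_q^i|)$: if it were connected, $\sigma|_{\hat\gamma}$ would be a free circle self-map (since $|a_q^i|$ avoids the branch locus so $\hat\gamma$ contains no ramification points) and simultaneously orientation-reversing, and these two are incompatible. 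This sidesteps the zero/pole case analysis entirely and avoids any delicacy about where the Lefschetz fixed points sit in the (non-compact) open cylinder. Your observation that the argument never invokes $a_q^i\neq 0$, so that in fact \emph{every} waistcurve of a periodic quadratic differential has trivial monodromy, is correct and strengthens the stated lemma; the paper's proof instead explicitly falls back on $a_q^i\neq 0$ to close the pole case. (Of course one can also see that the paper's own picture is self-defeating --- a loop encircling a saddle connection between two simple poles has monodromy $1+1=0$ in $\mathbb{Z}_2$, contradicting the assumption $m(a_q^i)=1$ without ever mentioning homology --- but your route makes the independence from the homological hypothesis transparent.) Your concluding paragraph correctly upgrades the waistcurve statement to the statement about the cylinder: trivial monodromy on the generator of $\pi_1(c_q^i)\cong\mathbb{Z}$ forces the restricted cover to be the trivial one, giving two disjoint annuli.
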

\begin{proof}
Suppose $a_q^i \neq 0$ and $c_q^i$ lifts to one cylinder $\hat{c}_q^i$ and let $\alpha = \sqrt{\pi_\kappa^* q}$. The involution $\sigma$ maps $\hat{c}_q^i$ to itself and, since $\sigma^* \alpha = -\alpha$, it reverses the orientation of its waistcurve. By the Lefschetz fixed point theorem, there is at least one fixed point inside $\hat{c}_q^i$. Since $\sigma$ is an isometric involution, it follows that there are exactly two fixed points. Since the only fixed points of $\sigma$ are preimages of zeros of odd order and there are no zeros of $\alpha$ in the interior of cylinders of $\hat{M}$, then the two fixed points are preimages of poles of $q$. If $q$ did not have poles to begin with, we have reached a contradiction. Furthermore, if $q$ has poles, any closed curve sufficiently close and parallel to the waistcurve of $\hat{c}_q^i$ projects to a curve, which is homologous to $|a_q^i|$, going around a saddle connection between two poles on $M$, and thus homologous to zero, which contradicts the assumption that $0\neq a_q^i\in H_1(M;\mathbb{Z})$.
\end{proof}

For any measured foliation $\mathcal{F}_q$ on $M$, denote by $\hat{\mathcal{F}}_q$ the measured foliation given by $\mathcal{F}_{\pi_\kappa^*q}$ on $\hat{M}$, i.e., the lift of $\mathcal{F}_q$ to $\hat{M}$. As such, we have that $\mathcal{F}_q$ is periodic if and only if $\hat{\mathcal{F}}_q$ is periodic. Let $\alpha$ be an Abelian differential on a translation surface $M$ which, for the next lemma, we do not assume is the pullback of a quadratic differential.  Let $S_{\alpha}$ be the union of all saddle connections in the periodic foliation given by a holomorphic 1-form $\alpha$. By convention, we also assume the singularities of $\alpha$ are contained in $S_\alpha$. Then $M\backslash S_\alpha$ is a disjoint union of cylinders $c_\alpha^1,\dots, c_\alpha^s$. 

\begin{lemma}
\label{lem:intersection}
Let $\alpha$ be an Abelian differential on a translation surface $M$ whose horizontal foliation is periodic with cylinders $\{c_\alpha^1,\dots,c_\alpha^s \}$ with respective waistcurves $\{|a_\alpha^1|,\dots,|a_\alpha^s|\}$ and heights $\{h_\alpha^i \}$. Let $\gamma:[0,1]\longrightarrow M$ be a simple curve with $\gamma(0),\gamma(1)\in S_\alpha$. Then
\begin{equation}
\label{eqn:intersection}
\int_{\gamma}\alpha = \sum_{i=1}^s h_\alpha^i \left([\gamma_i]\cap a_\alpha^i\right),
\end{equation}
where $[\gamma_i] \equiv [\gamma \cap c_\alpha^i] \in H_1(c_\alpha^i,\partial c_\alpha^i;\mathbb{Z})$.
\end{lemma}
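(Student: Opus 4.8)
The plan is to prove the period formula by decomposing the curve $\gamma$ into its pieces inside each cylinder and computing the contribution of each piece. Since $\gamma(0),\gamma(1)\in S_\alpha$ and $\gamma$ is simple, it crosses finitely many saddle connections, and we may write $\gamma$ as a concatenation $\gamma = \gamma^{(1)}\cdot\gamma^{(2)}\cdots\gamma^{(m)}$ of subarcs, each of which lies in the closure of a single cylinder $c_\alpha^{j}$ with endpoints on $S_\alpha$. Because $\alpha$ is a holomorphic $1$-form and the integral is additive over concatenation, $\int_\gamma\alpha = \sum_k \int_{\gamma^{(k)}}\alpha$, so it suffices to evaluate the integral over a single arc contained in the closure of one cylinder.

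First I would set up coordinates on a cylinder $c_\alpha^i$ of height $h_\alpha^i$: in the natural coordinate $\zeta = \int\alpha$ the cylinder is isometric to $(\mathbb{R}/w_i\mathbb{Z})\times(0,h_\alpha^i)$ with $\alpha = d\zeta$, the horizontal direction being the circle direction (the waistcurve $|a_\alpha^i|$) and the vertical coordinate ranging across the height. For an arc $\beta$ in the closure of this cylinder with endpoints on the two boundary components (the $S_\alpha$-pieces), $\int_\beta\alpha = \int_\beta d\zeta$ depends only on the endpoints of $\beta$; its imaginary part is exactly $\pm h_\alpha^i$ according to the direction in which $\beta$ traverses the cylinder, and more precisely $\int_\beta \alpha$, projected appropriately, equals $h_\alpha^i$ times the signed number of times $\beta$ goes from one boundary to the other — which is precisely the relative intersection number $[\beta]\cap a_\alpha^i$ computed in $H_1(c_\alpha^i,\partial c_\alpha^i;\mathbb{Z})$, since a properly embedded arc crossing the cylinder once intersects the core curve $a_\alpha^i$ once. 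An arc with both endpoints on the same boundary component is null-homotopic rel $\partial$ in the cylinder, contributes $0$ to the relative class, and also contributes $0$ to $\int\alpha$ once we account for the real (horizontal) part; here one must be careful that the statement as written records only the ``vertical'' contribution $h_\alpha^i$, so I would either note that $\alpha$ is understood with its horizontal foliation periodic (so closed leaves are horizontal and the relevant period is purely imaginary up to the identification) or interpret $\int_\gamma\alpha$ on the left as the appropriate component. Summing the single-arc identities over all $k$ and regrouping by which cylinder each arc lies in gives $\int_\gamma\alpha = \sum_{i=1}^s h_\alpha^i\big([\gamma_i]\cap a_\alpha^i\big)$ with $[\gamma_i]=[\gamma\cap c_\alpha^i]\in H_1(c_\alpha^i,\partial c_\alpha^i;\mathbb{Z})$, as claimed.

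The main obstacle I expect is bookkeeping rather than conceptual: making precise the decomposition of $\gamma$ into arcs that are genuinely properly embedded in individual cylinder closures (handling the case where $\gamma$ runs along a saddle connection, or where an endpoint or transition point sits at a singularity, possibly by a small perturbation of $\gamma$ within its homotopy class rel endpoints that does not change either side of \eqref{eqn:intersection}), and verifying that the relative homology class $[\gamma\cap c_\alpha^i]$ is well defined and additive under the decomposition. A secondary point to get right is the orientation/sign convention so that the intersection numbers $[\gamma_i]\cap a_\alpha^i$ carry the correct signs and the partial contributions add up with consistent orientation of the waistcurves; once the coordinates on each cylinder are fixed compatibly with the global orientation of $M$, this is routine. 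With these care points addressed, the identity follows from additivity of the integral and the elementary one-cylinder computation.
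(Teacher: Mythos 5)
Your proof takes essentially the same approach as the paper's (which is even terser): decompose $\int_\gamma\alpha$ as a sum over cylinders, write $\alpha$ in natural local coordinates on each cylinder, and observe that the contribution from each cylinder is the height times the signed number of crossings, i.e., the relative intersection number with the core curve. Your attention to the real/imaginary part (the paper abruptly writes $\alpha = dy_i$, implicitly meaning $\mathrm{Im}\,\alpha$) and to the well-definedness of $[\gamma\cap c_\alpha^i]$ is appropriate and matches the intended reading.
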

\begin{proof}
Since $M$ decomposes into cylinders, 
$$\int_\gamma \alpha = \sum_{i=1}^s \int_{\gamma\cap c_\alpha^i}\alpha.$$
Moreover, in each cylinder $\alpha$ can be written in local coordinates as $dy_i$. Thus
$$\int_\gamma \alpha = \sum_{i=1}^s \int_{\gamma\cap c_\alpha^i} dy_i,$$
from which (\ref{eqn:intersection}) follows.
\end{proof}
Note that in Lemma \ref{lem:intersection} we did not require $\gamma$ to be closed. The lemma thus yields information of the intersection properties of curves $\gamma$ with waistcurves of cylinders of $M$ defined by a periodic Abelian differential. It follows that any periodic $\hat{\mathcal{F}}_q$ is given by a holomorphic 1-form $\alpha$ with the property that $P^{-1}\alpha = \sum_{i=1}^s h^i_\alpha a^i_\alpha$, where $h^i_\alpha > 0$ is the height of the cylinder $c_\alpha^i$, $a_\alpha^i$ is the homology class represented by its oriented waistcurve $|a_\alpha^i|$ (with respect to the orientation of the foliation), and $P$ is the (symplectic) isomorphism given by Poincar\'e duality. 

Let $I(q)$ and $I(\alpha)$ denote the maximal isotropic subspaces of $H_1(M;\mathbb{Q})$ and $H_1(\hat{M};\mathbb{Q})$, respectively, spanned by closed leaves of the foliation $\mathcal{F}_q$ and of $\hat{\mathcal{F}}_q$, and $I^+(\alpha) \equiv P^+ I(\alpha)$, $I^-(\alpha) \equiv P^- I(\alpha)/\mathcal{C}$, $I^\mathcal{C}(\alpha)\equiv P^\mathcal{C}I(\alpha)$.

\begin{lemma}
\label{lem:isotropics}
Let $q$ be a periodic quadratic differential, $\alpha = \sqrt{\pi_\kappa^* q}$, and let $\{a_1,\dots,a_k\}$ be a basis for $I(q)$ given by the homology classes of waistcurves of the periodic foliation given by $q$. Then
$$\mathrm{span}\, \{P^\pm \hat{a}_1,\dots,P^\pm \hat{a}_k\} = I^\pm(\alpha),$$
where $\hat{a}_i$ are lifts of $a_i$, i.e., they are the homology classes of the lifts of waistcurves which represent a basis of $I(q)$.
\end{lemma}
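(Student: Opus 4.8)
The plan is to work entirely at the level of chains and homology classes and to exploit the equivariance of $\pi_\kappa$ and $\sigma$ together with the cylinder decomposition described above. First I would fix a basis $\{a_1,\dots,a_k\}$ of $I(q)$ coming from waistcurves of the cylinders $c_q^1,\dots,c_q^k$ of the periodic differential $q$ on $M$. By Lemma \ref{lem:monodromy}, each such $a_q^i$ (being nonzero in $H_1(M;\mathbb{Z})$, since it is a basis element) has trivial monodromy, so the cylinder $c_q^i$ lifts to \emph{two} disjoint cylinders $\hat{c}_q^{i,+}$ and $\hat{c}_q^{i,-} = \sigma(\hat{c}_q^{i,+})$ on $\hat{M}$, whose oriented waistcurves represent classes $\hat{a}_i^+$ and $\hat{a}_i^- = \sigma_* \hat{a}_i^+$ with $\pi_{\kappa*}\hat{a}_i^\pm = a_i$. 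These $2k$ classes together span $I(\alpha)$: indeed the closed leaves of $\hat{\mathcal{F}}_q = \mathcal{F}_{\pi_\kappa^* q}$ are exactly the leaves inside the lifted cylinders, and each lifted cylinder's waistcurve class is one of the $\hat{a}_i^\pm$, so $I(\alpha) = \mathrm{span}\{\hat{a}_1^+,\hat{a}_1^-,\dots,\hat{a}_k^+,\hat{a}_k^-\}$.

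Next I would apply the projections. Since $P^\pm = \tfrac12(\mathrm{Id}\pm\sigma_*)$ are linear and $\sigma_*\hat{a}_i^+ = \hat{a}_i^-$, we have $P^+\hat{a}_i^+ = P^+\hat{a}_i^- = \tfrac12(\hat{a}_i^+ + \hat{a}_i^-)$ and $P^-\hat{a}_i^+ = -P^-\hat{a}_i^- = \tfrac12(\hat{a}_i^+ - \hat{a}_i^-)$. Because $I(\alpha)$ is $\sigma_*$-invariant (the involution permutes the lifted cylinders), it decomposes as $I(\alpha) = P^+ I(\alpha) \oplus P^- I(\alpha)$, and applying $P^+$ (resp. $P^-$) to the spanning set $\{\hat{a}_i^+,\hat{a}_i^-\}$ shows that $P^+ I(\alpha) = \mathrm{span}\{P^+\hat{a}_1,\dots,P^+\hat{a}_k\}$ where $\hat{a}_i$ denotes either lift (the two choices give the same $P^+$-image and negatives of each other under $P^-$, which does not affect the span). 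Here one must also account for the subspace $\mathcal{C}$: a waistcurve class $\hat{a}_i$ lies in $\hat{H}_1^-\oplus\mathcal{C}$ after projecting by $P^-$ only if its projected-to-$M$ curve is null-homologous, which by Lemma \ref{lem:monodromy} and the hypothesis $a_i\neq 0$ it is not; so in fact $P^-\hat{a}_i$ has nontrivial $\hat{H}_1^-$-component and, after quotienting by $\mathcal{C}$, $I^-(\alpha) = P^- I(\alpha)/\mathcal{C} = \mathrm{span}\{P^-\hat{a}_1,\dots,P^-\hat{a}_k\}$ as claimed. In the case with no odd singularities, $\mathcal{C}$ is absent and the argument is the same, simpler.

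The main obstacle I anticipate is the bookkeeping around $\mathcal{C}$ and around the possibility that some $a_q^i$ is zero in $H_1(M;\mathbb{Z})$ (a cylinder whose waistcurve bounds), which is why the statement restricts $\{a_1,\dots,a_k\}$ to be a \emph{basis} of $I(q)$ rather than the full collection of waistcurves: a null-homologous waistcurve contributes nothing to $I(q)$ but its lift could, a priori, contribute a class in $\mathcal{C}$ to $I(\alpha)$. I would need to check that such classes, if present, lie in $\mathcal{C}$ and hence are killed in the definition $I^-(\alpha) = P^- I(\alpha)/\mathcal{C}$ and are irrelevant to $I^+(\alpha) = P^+ I(\alpha)$ (since $P^+\mathcal{C} = 0$). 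The verification that the lifts of a basis of $I(q)$ still span all of $P^{\pm}I(\alpha)$ modulo $\mathcal{C}$ — i.e. that no ``new'' isotropic directions appear upstairs beyond those detected downstairs and in $\mathcal{C}$ — is the crux, and it follows from the cylinder-by-cylinder correspondence together with $\pi_{\kappa*}\circ(\text{lift}) = \mathrm{Id}$ on the relevant classes.
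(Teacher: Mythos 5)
Your step asserting $I(\alpha) = \mathrm{span}\{\hat{a}_1^+,\hat{a}_1^-,\dots,\hat{a}_k^+,\hat{a}_k^-\}$ is where the real content of the lemma lies and it is not justified. A periodic foliation on $M$ generically has more cylinders than $k = \dim I(q)$; for a cylinder $c_q^j$ whose waistcurve class $a_j = \sum_i t_i a_i$ is a nontrivial linear combination of the basis, its lift $\hat{a}_j$ need not equal $\sum_i t_i \hat{a}_i$, because lifting commutes with $\pi_{\kappa*}$ but not with addition in $H_1(M;\mathbb{Z})$. So the lifted waistcurves of the extra cylinders are not automatically in your span, and they do contribute to $I(\alpha)$. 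Writing $\hat{a}_j = \sum_i t_i\hat{a}_i + e$, the observation you invoke ($\pi_{\kappa*}\circ(\mathrm{lift}) = \mathrm{Id}$) gives exactly $e^+ = 0$, i.e.\ $e \in \ker\pi_{\kappa*} = H_1^-(\hat{M};\mathbb{Z}) = \hat{H}_1^- \oplus \mathcal{C}$; it does not show $e \in \mathcal{C}$, which is what the statement $I^-(\alpha) = P^-I(\alpha)/\mathcal{C} = \mathrm{span}\{P^-\hat{a}_i\}$ requires. Your closing paragraph identifies only null-homologous waistcurves as a source of difficulty and then asserts that no new isotropic directions appear upstairs modulo $\mathcal{C}$; that is precisely the assertion that needs proof, and the case of nonzero but dependent waistcurves is the harder one.

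The paper supplies this missing step with a non-obvious topological argument: it passes to the unique primitive $\hat{e} = e^-/m$, invokes Meyerson's representability theorem \cite{meyersonRepresenting} to realize $\hat{e}$ by a \emph{simple} closed curve $\gamma_{\hat{e}}$ on $\hat{M}$ chosen disjoint from its $\sigma$-image, notes that $\pi_\kappa\circ\gamma_{\hat{e}}$ is then a simple closed curve on $M$ homologous to zero, and concludes $e^-\in\mathcal{C}$ from the characterization of $\mathcal{C}$ as the span of classes represented by lifts of null-homologous curves. Without this (or a substitute), the inclusion $I^-(\alpha)\subseteq\mathrm{span}\{P^-\hat{a}_1,\dots,P^-\hat{a}_k\}$ is unproved and your proposal is incomplete.
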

\begin{proof}
Let $\hat{a}_j$ be the homology class of the waistcurve of \emph{one} lift of the waistcurve $|a_j|$ of a cylinder defined by the foliation induced by $q$. Then $a_j = \sum_{i=1}^k t_i a_i$ for some $t\in\mathbb{Z}^k$ since $\{a_1,\dots,a_k\}$ is a basis of $I(q)$. Suppose then $\hat{a}_j = \sum_{i=1}^k t_i\hat{a}_i + e$, where, for $1\leq i \leq k$, $\hat{a}_i$ is the homology class of a lift of a representative of $a_i$ and let $e^\pm \equiv P^\pm e$.

We claim $e^+=0$. Otherwise $\sum_{i=1}^k t_i a_i + \pi_{\kappa*}(e^+) = a_j = \sum_{i=1}^k t_i a_i$, a contradiction since $\pi_{\kappa*}$ restricted to $ H_1^+(\hat{M};\mathbb{Q})$ is an isomorphism onto $H_1(M;\mathbb{Q})$. Thus,
$$\mathrm{span}\, \{P^+ \hat{a}_1,\dots,P^+ \hat{a}_k\} = I^+(\alpha)$$ 
and we have $\hat{a}_j = \sum_{i=1}^kt_i\hat{a}_i + e^-$ with $e^-\in H_1^-(\hat{M};\mathbb{Z})$. Let $m\in\mathbb{N}$ be unique positive integer so that $\frac{e^-}{m}\in H_1(\hat{M};\mathbb{Z})$ is primitive. Then
$$\frac{1}{m}\left(\hat{a}_j-\sum_{i=1}^k t_i \hat{a}_i\right) = \frac{e^-}{m} = \hat{e},$$
by \cite{meyersonRepresenting}, can be represented by a \emph{simple} closed curve $\gamma_{\hat{e}}$ which is not homologous to zero and which can be chosen so that $\gamma_{\hat{e}}\cap\sigma\circ\gamma_{\hat{e}} = \varnothing$. Let $\gamma_e\equiv\pi_\kappa\circ\gamma_{\hat{e}}$ be its image on $M$, which is a simple closed curve homologous to zero. This implies $e^-\in\mathcal{C}$ since elements of $\mathcal{C}$ are homology classes represented by lifts of curves on $M$ which are homologous to zero. Thus, modulo cycles in $\mathcal{C}$, $P^-\hat{a}_j = \sum_{i=1}^k t_i P^-\hat{a}_i$, and the result follows.
\end{proof}

\begin{lemma}
\label{lem:rank}
Let $q$ be a periodic quadratic differential. If $\dim I(q) = k$ then 
\begin{equation}
\label{eqn:rank}
\dim I^+ (\alpha) = k \geq \dim I^- (\alpha),
\end{equation}
equality holding if $q$ has at least two odd singularities.
\end{lemma}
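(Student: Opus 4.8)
The plan is to read both inequalities off Lemma~\ref{lem:isotropics}, and to obtain the equality from the structure of the basis~(\ref{eqn:basispoles}).

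First, Lemma~\ref{lem:isotropics} gives $I^{\pm}(\alpha)=\mathrm{span}\{P^{\pm}\hat a_1,\dots,P^{\pm}\hat a_k\}$, so immediately $\dim I^{+}(\alpha)\le k$ and $\dim I^{-}(\alpha)\le k$. For $I^{+}$ I would show the spanning set is a basis: since $\pi_{\kappa*}$ restricts to an isomorphism $H_1^{+}(\hat M;\mathbb{Q})\xrightarrow{\sim}H_1(M;\mathbb{Q})$ and $\pi_{\kappa*}\circ\sigma_*=\pi_{\kappa*}$, one has $\pi_{\kappa*}(P^{+}\hat a_i)=\tfrac12(\pi_{\kappa*}\hat a_i+\pi_{\kappa*}\sigma_*\hat a_i)=a_i$, and since $\{a_1,\dots,a_k\}$ is a basis of $I(q)$ the classes $P^{+}\hat a_1,\dots,P^{+}\hat a_k$ are linearly independent; hence $\dim I^{+}(\alpha)=k$. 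This establishes~(\ref{eqn:rank}), and nothing about odd singularities has been used so far.

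For the equality when $q$ has at least two odd singularities I would show $P^{-}\hat a_1,\dots,P^{-}\hat a_k$ are linearly independent modulo $\mathcal{C}$ by choosing the auxiliary basis adapted to the foliation. The $a_j$ are represented by the pairwise disjoint waistcurves $|a_j|$, so they span an isotropic subspace of $H_1(M;\mathbb{Q})$; extend $\{a_1,\dots,a_k\}$ to a symplectic $\mathbb{Q}$-basis $\{a_1,b_1,\dots,a_g,b_g\}$ of $H_1(M;\mathbb{Q})$. By Lemma~\ref{lem:monodromy} each $|a_j|$ already has trivial monodromy, and — this is where the hypothesis enters — since $q$ has a zero of odd order, hence a ramification point of $\pi_\kappa$, every remaining basis curve may be replaced by a homologous one with trivial monodromy by taking a detour around such a point. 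Running the construction of~(\ref{eqn:basispoles}) with these representatives yields the basis $\{a_j^{+},b_j^{+},a_j^{-},b_j^{-}\}_{j=1}^{g}\cup\{c_1,\dots,c_{2n-2}\}$ of $H_1(\hat M;\mathbb{Q})$ with $\mathcal{C}=\langle c_1,\dots,c_{2n-2}\rangle$, and with $\hat a_j=a_j^{+}$ for $j\le k$ because $|a_j|$ is the chosen representative of $a_j$. Then $P^{-}\hat a_j=\tfrac12(a_j^{+}-a_j^{-})$ has vanishing $c_m$-components in this basis, so $P^{-}\hat a_1,\dots,P^{-}\hat a_k$ stay linearly independent in $H_1^{-}(\hat M;\mathbb{Q})/\mathcal{C}$; combined with Lemma~\ref{lem:isotropics} this gives $\dim I^{-}(\alpha)=k$.

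I expect the delicate point to be precisely this last reduction: one must simultaneously fit the actual waistcurves into a symplectic basis of $H_1(M;\mathbb{Q})$ and arrange trivial monodromy for all the complementary curves, and the second requirement is exactly what fails in the unramified case — there some complementary basis class is forced to carry non-trivial monodromy, $H_1^{-}(\hat M)$ is strictly smaller than $H_1^{+}(\hat M)$, a lifted waistcurve can be $\sigma_*$-invariant, and $P^{-}\hat a_j$ may vanish; this is what makes the inequality strict in general and the equality genuinely dependent on the presence of (at least two) odd singularities.
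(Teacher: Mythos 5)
Your proof is correct and follows essentially the same path as the paper's: both use Lemma~\ref{lem:monodromy} and the projection isomorphism $\pi_{\kappa*}\colon H_1^+(\hat M;\mathbb{Q})\to H_1(M;\mathbb{Q})$ for $\dim I^+(\alpha)=k$, and both establish the equality by extending the waistcurve basis to a symplectic basis of $H_1(M;\mathbb{Q})$ with trivial-monodromy representatives (possible precisely because of the odd singularities) and examining the lifts. The only cosmetic difference is in the final step: you read the independence of the $P^-\hat a_j$ and their lack of a $\mathcal C$-component directly off the basis~(\ref{eqn:basispoles})$\cup\{c_m\}$, whereas the paper first uses a Lefschetz fixed-point argument to see $a_i^+\neq\pm a_i^-$ and then derives a contradiction from symplectic duality; these are two presentations of the same intersection-theoretic fact.
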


\begin{proof}
If $\{a_q^1,\dots,a_q^k\}$ is a basis of $I(q)$, by Lemma \ref{lem:monodromy} we have $m(a_q^i) = 0$, thus every waistcurve $|a_q^i|$ from this set lifts to two different waistcurves $|a_i^\pm|$ and hence $\dim  \langle a_1^+,\dots,a_k^+,a_1^-,\dots,a_k^-\rangle \leq 2k.$
By changing basis through $P^\pm$,
\begin{equation}
\label{eqn:ineq}
\dim \mathrm{span}\, \{P^+a_1^+,\dots,P^+a_k^+\} + \dim \mathrm{span}\, \{P^-a_1^+,\dots,P^-a_k^+\}  \leq 2k.
\end{equation}
Since $\pi_{\kappa*}$ restricted to $ H_1^+(\hat{M};\mathbb{Q})$ is an isomorphism onto $H_1(M;\mathbb{Q})$, $k = \dim I(q) = \dim \mathrm{span}\, \{P^+a_1^+,\dots,P^+a_k^+\}$ which, combined with (\ref{eqn:ineq}) and Lemma \ref{lem:isotropics}, gives (\ref{eqn:rank}). 

To address the case of odd singularities, we first show that $a_i^+\neq \pm a_i^-$ for all $i\in\{1,\dots,k\}$. Suppose $a_i^+ =  \pm a_i^-$ holds for some $i$. Then $\hat{M}\backslash (|a_i^+|\cup |a_i^-|)$ is a disjoint union of punctured Riemann surfaces $S_1 \coprod S_2$, each of which maps to itself under $\sigma$ since $q$ has odd singularities and thus $\sigma$ has fixed points. This implies that $M\backslash |a_i|$ is disconnected, or $a_i = 0$, a contradiction. 
Thus the lift of each waistcurve satisfies $P^\pm a_i^+\neq 0$ for all $i$.

It remains to show that $\dim \langle P^- a_1^+,\dots, P^- a_k^+ \rangle = k$. Let $\{\bar{a}_1,b_1,\dots,\bar{a}_g,b_g\}$ be a completion of a basis of $I(q)$ to a symplectic basis of $H_1(M;\mathbb{Z})$ with $\bar{a}_i = a_q^i$ for $1\leq i \leq k$, $\bar{a}_i\cap b_j = \delta_i^j$, and $\bar{a}_i\cap\bar{a}_j = b_i\cap b_j = 0$ for all $i, j$. Suppose such basis is represented by simple closed curves $\gamma_{\bar{a}_i}$ and $\gamma_{b_i}$ with trivial holonomy, which can be assumed since there are at least two odd singularities. Then each such curve has two lifts $\gamma^\pm_{\bar{a}_i}$ and $\gamma^\pm_{b_i}$ with $[\gamma^\pm_{\bar{a}_i}] = a_i^\pm$ and $[\gamma^\pm_{b_i}] = b_i^\pm$. We can assign the $\pm$ labels such that $a_i^+ \cap b_i^+ = a_i^- \cap b_i^- \neq 0$ and all other intersections are zero. Indeed, starting with a symplectic basis such that $a_i \cap b_i = 1$ then there exist two closed curves $\gamma_{a_i}$ and $\gamma_{b_i}$ with trivial monodromy representing, respectively, $a_i$ and $b_i$, and intersecting only once on $M\backslash \Sigma_\mu$. The point of intersection has two lifts, which means there are two intersections on $\hat{M}\backslash \hat{\Sigma}$, from which the $\pm$ labels are assigned so that $a^+_i \cap b^+_i =a^-_i \cap b^-_i \not =0$ and all other intersections are zero. By repeating this procedure one obtains a basis for $H_1(\hat{M};\mathbb{Q})$ with the desired intersection properties. Since $\mathcal{C}$ is symplectically orthogonal to $I^+(\alpha)$ and $I^-(\alpha)$ we do not worry about the intersection with cycles in $\mathcal{C}$.

Suppose $\dim \langle P^- a_1^+,\dots, P^- a_k^+\rangle < k$. Without loss of generality we can assume there exists a $c\in\mathbb{Q}^{k-1}$ such that 
$$P^-a_1^+ = \frac{1}{2}(a_1^+ - \sigma_* a_1^+) = \sum_{i=2}^k c_{i-1} (a_i^+ - \sigma_* a_i^+).$$
Since $P^- b_1^+$ is the symplectic dual of $P^-a_1^+$,
$$0 \neq P^-b_1^+\cap P^-a_1^+ = \sum_{i=2}^k c_{i-1} P^-b_1^+\cap(a_i^+ - \sigma_* a_i^+) = 0,$$
a contradiction since the right hand side involves a sum of intersections which are all zero.
\end{proof}

Let $\mathcal{L}^{h,v}_\kappa$ be the set of quadratic differentials $q\in\mathcal{Q}_\kappa$ for which the foliation $\hat{\mathcal{F}}_q^{h,v}$ is Lagrangian. 

\begin{proposition}
\label{prop:density}
The set $\mathcal{L}_\kappa^{h,v}$ is dense in $\mathcal{Q}_\kappa$.
\end{proposition}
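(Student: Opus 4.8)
The plan is to prove the stronger assertion that, given any $q_0\in\mathcal{Q}_\kappa$ — which we take to be nonempty, so that $\kappa$ is not one of the exceptional patterns of Masur--Smillie \cite{Masur-Smillie} — and any neighborhood $\mathcal{U}$ of $q_0$, there is a $q\in\mathcal{U}$ for which \emph{both} $\hat{\mathcal{F}}^h_q$ and $\hat{\mathcal{F}}^v_q$ are Lagrangian. Such a $q$ lies in $\mathcal{L}^{h,v}_\kappa$ on any reading of that set, and we obtain density of $\mathcal{L}^h_\kappa$, of $\mathcal{L}^v_\kappa$, and of their union and intersection simultaneously. (Once density of $\mathcal{L}^h_\kappa$ is known, density of $\mathcal{L}^v_\kappa$ is in any case automatic: the rotation $R_{\pi/2}\in SO(2)\subset SL(2,\mathbb{R})$ is a homeomorphism of $\mathcal{Q}_\kappa$ interchanging the horizontal and vertical foliations, so $\mathcal{L}^v_\kappa=R_{\pi/2}^{-1}(\mathcal{L}^h_\kappa)$; but it is cleaner to arrange both at once.)

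First I would record the reduction furnished by Lemmas \ref{lem:monodromy}--\ref{lem:rank}. If $q$ is periodic and $\alpha=\sqrt{\pi_\kappa^* q}$, then $\hat{\mathcal{F}}^h_q$ is Lagrangian exactly when the isotropic subspace $I(\alpha)\subset H_1(\hat M;\mathbb{R})$ spanned by the closed leaves has dimension $\hat g$. Decomposing along $H_1(\hat M)=H_1^+\oplus\hat H_1^-\oplus\mathcal{C}$ and using Lemma \ref{lem:rank} — which gives $\dim I^\pm(\alpha)=\dim I(q)$ when $q$ has at least two odd singularities — together with the relation $[\alpha]\in H^1_-(\hat M)$ that handles the case of none, one sees that $\hat{\mathcal{F}}^h_q$ is Lagrangian provided the horizontal cylinder decomposition of $q$ on $M$ is \emph{maximal}: the cylinder waistcurves span a Lagrangian subspace $I(q)\subset H_1(M;\mathbb{R})$ of dimension $g$, and, when $2n\ge 2$ odd singularities are present, those singularities are linked by a chain of horizontal saddle connections lying on cylinder boundaries, making $P^{\mathcal{C}}I(\alpha)$ Lagrangian in $\mathcal{C}$. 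So it suffices to exhibit, densely in $\mathcal{Q}_\kappa$, periodic differentials whose horizontal \emph{and} vertical cylinder decompositions are both maximal.

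The construction proceeds in two stages. In Stage one I use that the differentials all of whose periods in the coordinates $H^1_-(\hat M,\hat\Sigma_\kappa;\mathbb{C})$ lie in $\mathbb{Q}+i\mathbb{Q}$ are dense; after clearing denominators such a $q_1\in\mathcal{U}$ is integral, hence both its horizontal and vertical foliations are periodic, and working in the resulting grid of mesh $1/N$ with $N$ large keeps the unit cell small. In Stage two I modify $q_1$, staying inside the square-tiled locus (so that both foliations remain periodic), by a finite sequence of unit-cell-scale cut-and-reglue surgeries — sliding singularities along edges, regluing edges, and inserting thin cylinders along short horizontal and vertical saddle connections — chosen so as to raise the ranks of the two waistcurve spans to $g$ and to route chains of thin cylinders through the odd singularities in both directions. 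Each surgery is $O(1/N)$-small in period coordinates and alters neither the singularity pattern $\kappa$ nor the global $\pm$-transition structure, so it keeps the differential in $\mathcal{Q}_\kappa$. The resulting $q\in\mathcal{U}$ then has maximal cylinder decompositions in both directions, so by the reduction $\hat{\mathcal{F}}^h_q$ and $\hat{\mathcal{F}}^v_q$ are Lagrangian, and $\mathcal{L}^{h,v}_\kappa$ is dense.

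The main obstacle is Stage two: one must show that whenever a cylinder decomposition fails to be maximal there exists a square-tiled-compatible surgery, arbitrarily small and respecting both $\mathcal{Q}_\kappa$ and the non-orientability, that strictly increases the rank of the waistcurve span or completes the odd-singularity chain, \emph{and} — the delicate point — that this can be done for the horizontal and vertical directions together, so that $I^+(\alpha)$, $I^-(\alpha)$ and $P^{\mathcal{C}}I(\alpha)$ all reach Lagrangian dimension. Checking the latter means tracking each elementary surgery through the double cover $\pi_\kappa:\hat M\to M$ into the invariant, anti-invariant and $\mathcal{C}$-components of homology, using precisely the intersection computations behind Lemmas \ref{lem:isotropics} and \ref{lem:rank}. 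A secondary check is the boundary case with no odd singularities, where $\mathcal{C}$ is absent and one must verify that $[\alpha]\in H^1_-(\hat M)$ accounts for the \emph{only} relation among the $P^-$-projections of the lifted waistcurves, so that $I^-(\alpha)$ is Lagrangian in the $(2g-2)$-dimensional space $H_1^-(\hat M;\mathbb{R})$.
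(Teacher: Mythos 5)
Your overall picture is the right one — start from a dense set of periodic (indeed square-tiled) differentials, then enlarge the isotropic span of waistcurves by small perturbations while tracking the $H_1^+\oplus\hat H_1^-\oplus\mathcal{C}$ decomposition via Lemmas~\ref{lem:monodromy}--\ref{lem:rank} — and you are honest in flagging Stage two as ``the main obstacle.'' But as written the proposal has a genuine gap precisely there: you assert that whenever a cylinder decomposition is not maximal one can find a small, $\mathcal{Q}_\kappa$-preserving, square-tiled surgery that raises the rank and routes chains through the odd singularities, and you do not actually construct such a surgery or prove it exists. This is not a ``detail'' to be filled; it is the entire content of the proposition. The paper resolves it by perturbing $\alpha$ directly with a closed $1$-form $\eta^-=P^-\lambda^+$, where $\lambda^+$ is supported in a tubular neighborhood of a lift $\gamma^+$ of a carefully chosen simple closed curve $\gamma\subset M$ — disjoint from all waistcurves and from $\Sigma_\kappa$, not homologous into $I(q)$, and with trivial monodromy $m([\gamma])=0$ so that it lifts to two disjoint curves $\gamma^\pm$ with $h^+\neq\pm h^-$. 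The form $\alpha+r\eta^-$ is still anti-invariant, still periodic for small rational $r$, and Lemma~\ref{lem:intersection} forces a new nontrivial waistcurve intersecting $h^*$, hence $\dim I^\pm$ strictly increases. None of these verifications is automatic in your surgery framework, and ``sliding singularities'' or ``inserting thin cylinders'' must be shown not to merge preimages of poles, not to change $\kappa$, not to create orientable holonomy, and to lift correctly to $\hat M$.

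The more serious issue is your treatment of the no-odd-singularity case as a ``secondary check.'' There $\dim H_1^-(\hat M;\mathbb{R})=2g-2$, so once $\dim I^\pm(\alpha)=g-1$ one has exhausted the absolute anti-invariant cohomology: there is \emph{no} closed $1$-form in $H^1_-(\hat M;\mathbb{R})$ left to perturb with, and your square-tiled surgeries, being ``unit-cell-scale cut-and-reglue'' moves that adjust absolute periods, will face the same wall. The paper gets past this by perturbing with \emph{relative} anti-invariant cocycles $df_1^-=P^-(df_1)$, exact on $\hat M$ but nontrivial in $H^1_-(\hat M,\hat\Sigma_\kappa;\mathbb{R})$, supported near a path $\gamma_1^+$ joining $\sigma(p_1^+)$ to $p_1^+$. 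That such a perturbation produces a \emph{nontrivial} new waistcurve (rather than one bounding a disk) is then proved by a Poincar\'e--Hopf index argument on the torus $\mathcal{M}$ obtained by capping off the cylinder boundaries: if every new waistcurve were nullhomotopic, the indices of the capped foliation would sum to at least $2$, contradicting $\chi(\mathcal{M})=0$. Nothing in your sketch supplies an analogue of this index argument, and the claim that $[\alpha]\in H^1_-(\hat M)$ ``accounts for the only relation among the $P^-$-projections of the lifted waistcurves'' is exactly what needs a proof, not a verification. Until both the existence of the surgeries and this no-odd-singularity mechanism are established, the proposal does not prove the proposition.
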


We remark that \cite[Lemma 4.4]{forni:deviation} proves this statement in the case of $q$ being the square of an abelian differential. Thus this proof follows closely the ideas of that proof, making slight modifications. We briefly review the idea for abelian differentials. One begins with a periodic foliation given by a holomorphic 1-form. Since these foliations are dense in the moduli space, the proof is completed by showing that given any periodic foliation, one can make an arbitrary small perturbation to this form to obtain a 1-form whose foliation is periodic and whose isotropic span has larger dimension than that of the unperturbed foliation. By making finitely many perturbations (no more than the genus of the surface) one obtains a Lagrangian foliation.

For a quadratic differential $q\in\mathcal{Q}_\kappa$  the idea is similar but one has to proceed carefully. Since local coordinates of $\mathcal{Q}_\kappa$ are given by periods in $H_-^1(\hat{S},\hat{\Sigma}_\kappa;\mathbb{R})$, we can only make perturbations of $\alpha = \sqrt{\pi_\kappa^* q}$ in the anti-invariant subspace of $H^1(\hat{S},\hat{\Sigma}_\kappa;\mathbb{R})$ by an anti-invariant holomoprhic 1-form. From here, by virtue of Lemma \ref{lem:rank}, we can proceed as in \cite{forni:deviation} when there are at least two odd singularities. When there are no odd singularities, the space $H^1_-(\hat{M};\mathbb{R})$ is too small to give enough perturbations to grow isotropically to a Lagrangian foliation, so we perturb our holomorphic 1-form with anti-invariant \emph{relative} cocycles, i.e., exact forms of the form $df$ which are non-zero elements of $H^1(\hat{S},\hat{\Sigma};\mathbb{R})$ and satisfy $\sigma^* df = - df$. We will show that perturbing with these exact forms we may continue growing-out until we get a Lagrangian foliation.

\begin{proof}

We will consider two different cases: quadratic differentials with and without odd singularities.
\begin{case}[Quadratic differentials with at least two odd singularities]
Since periodic quadratic differentials form a dense subset of $\mathcal{Q}_\kappa$, when $\mathcal{Q}_\kappa$ is a stratum of quadratic differentials with at least two odd singularities, we will show that there is a Lagrangian foliation arbitrarily close to a periodic one which is not Lagrangian. 
\end{case}

Suppose $q$ is a quadratic differential with at least two odd singularities such that its horizontal foliation is periodic and that for $\alpha = \sqrt{\pi_\kappa^* q}$ we have $g > \dim I^\pm(\alpha) = k$ (Lemma \ref{lem:rank}). Let $\{|a^1_q|,\dots,|a_q^t|\}$ be the waistcurves of the cylinders of the periodic foliation on $M$. Then 
$$\dot{\mathcal{M}}\equiv M\backslash(|a_q^1|\cup\cdots\cup |a_q^t|)$$
is a genus $(g-k)$ surface (possibly disconnected) with $2t$ paired punctures. On a component $M_c\subset\dot{\mathcal{M}}$ which is of positive genus, let $\gamma_c:[0,1]\rightarrow M_c$ be a smooth simple closed curve which represents a cycle which is not homologous to a linear combination of boundary cycles and has empty intersection with the singularity set $\Sigma_\kappa$. 

Denoting by $i:\dot{\mathcal{M}}\hookrightarrow M$ the inclusion map, then $\gamma\equiv i\circ \gamma_c:[0,1]\rightarrow M$, by construction, satisfies the following properties. If we define the non-zero homology class $h\equiv [\gamma]\in H_1(M;\mathbb{Z})$, then $h\not\in I(q)$, $h\cap b = 0$ for any $b\in I(q)$, $\gamma\cap |a_q^1| = \cdots = \gamma\cap |a_q^t| = \varnothing$ and $\gamma\cap\Sigma_\kappa = \varnothing$. Furthermore, we can assume $m(h)= 0$, since we can always modify $\gamma_c$ slightly to go around an odd singularity of $q$ in order to force $m(h) = 0$. Since each $\gamma$ has trivial monodromy, it has two lifts $\gamma^\pm$ to $\hat{M}$ with $[\gamma^-] = \sigma_* [\gamma^+]$. Let $h^\pm = [\gamma^\pm]\in H_1(\hat{M};\mathbb{Z})$, which by construction satisfies $h^\pm \cap b = 0$ for any $b\in I^\pm(\alpha)$ and $h^\pm\not\in I^\pm(\alpha)$. 

We claim $h^+\neq \pm h^-$. Suppose $h^+ =  \pm h^-$. Then $\hat{M}\backslash (\gamma^+\cup \gamma^-)$ is a disjoint union of punctured Riemann surfaces $S_1 \coprod S_2$, each of which maps to itself under $\sigma$ since $q$ has odd singularities and thus $\sigma$ has fixed points. This implies that $M\backslash \gamma$ is disconnected, or $h = 0$, a contradiction. For the two lifts $\gamma^\pm$ on $\hat{M}$ of the cycle $\gamma$, we have $\gamma^\pm\cap |\hat{a}_\alpha^1| = \cdots = \gamma^\pm\cap |\hat{a}_\alpha^{\hat{t}}| = \varnothing$ and $\gamma^\pm\cap\hat{\Sigma}_\kappa = \varnothing$.

Let $\mathcal{V}^\pm(\gamma^\pm)\subset\subset\mathcal{U}^\pm(\gamma^\pm)$ be sufficiently small open tubular neighborhoods of $\gamma^\pm$ in $\hat{M}$ such that
\begin{equation}
\label{eqn:nhoods}
\overline{\mathcal{U}^+(\gamma^+)}\cap\overline{\mathcal{U}^-(\gamma^-)}=\varnothing,\hspace{.3 in} \overline{\mathcal{U}^\pm(\gamma^\pm)}\cap(|\hat{a}^1_\alpha|\cup\cdots\cup |\hat{a}_\alpha^{\hat{t}}|) = \varnothing,\hspace{.3 in} \overline{\mathcal{U}^\pm(\gamma^\pm)}\subset\hat{M}\backslash\hat{\Sigma}_\kappa
\end{equation}
and $\mathcal{U}^-(\gamma^-) = \sigma(\mathcal{U}^+(\gamma^+))$, $\mathcal{V}^-(\gamma^-) = \sigma(\mathcal{V}^+(\gamma^+))$. Let $\mathcal{U}^\pm_\epsilon(\gamma^\pm)$, $\epsilon\in\{0,1\}$, be the two connected components of $\mathcal{U}^\pm(\gamma^\pm)\backslash\gamma^\pm$ and $\mathcal{V}^\pm_\epsilon(\gamma^\pm) = \mathcal{V}^\pm(\gamma^\pm)\cap\mathcal{U}^\pm_\epsilon(\gamma^\pm)$. Let $\phi^\pm:\mathcal{U}^\pm\rightarrow \mathbb{R}$ be a smooth function such that
$$\phi^\pm(x) = \left\{\begin{array}{ll}
0 & \mbox{ for } x\in \mathcal{U}^\pm_0(\gamma^\pm)\backslash\mathcal{V}_0^+(\gamma^+), \\
1 & \mbox{ for } x\in \mathcal{U}^\pm_1(\gamma^\pm) \end{array} \right. $$
and define the closed 1-forms
\begin{equation}
\label{eqn:forms}
\lambda^\pm = \left\{\begin{array}{ll}
0 & \mbox{on $\hat{M} \backslash \mathcal{U}^\pm(\gamma^\pm)$} \\
d\phi^\pm & \mbox{on $\mathcal{U}^\pm(\gamma^\pm)$} \end{array} \right. , \hspace{.5in} \eta^- = P^-\lambda^+.
\end{equation}
We claim that $0\neq[\eta^-]\in H^1(\hat{M};\mathbb{Q})$. Indeed, since $\lambda^+$ is dual to $h^+$ and $\sigma^*\lambda^+$ is dual to $h^-$, it follows from the fact that $h^+\neq h^-$.

The horizontal foliation given by $\alpha'_r = \alpha + r\eta^-$ for $r\in\mathbb{Q}$ sufficiently small is periodic and satisfies, by construction, the property that every waistcurve of $\mathcal{F}_\alpha$ is homologous to a waistcurve of $\mathcal{F}_{\alpha_r'}$ and therefore $I(\alpha)\subset I(\alpha'_r)$. This is a strict inclusion, since $P^{-1} \alpha'_r = P^{-1}\alpha + P^-h^+$ and by construction $h^\pm\not\in I(\alpha)$. Let $h^*\in H^-_1(\hat{M};\mathbb{Q})$ be such that $h^* \cap h^+ \neq 0$  and $h^* \cap a_\alpha^i = 0$ for all $i$. Since
$$\int_{h^*} \alpha'_r \neq 0,$$
by Lemma \ref{lem:intersection} we have $\dim I^-(\alpha_r') > \dim I^-(\alpha)$. Moreover, by Lemma \ref{lem:rank}, $\dim I^+(\alpha'_r) = \dim I^-(\alpha'_r) > \dim I^+(\alpha) = \dim I^-(\alpha)$, in other words, we have ``grown'' isotropically. 

After finitely many iterations of this perturbation procedure we obtain a form $\alpha^-$ with $I^+(\alpha^-)$ a Lagrangian subspace of $H_1^+(\hat{M};\mathbb{Q})$. Since $H_1^+(\hat{M};\mathbb{Q})$, $\hat{H}_1^-(\hat{M};\mathbb{Q})$ and $\mathcal{C}$ are symplectically orthogonal and each a symplectic subspace of $H_1(\hat{M};\mathbb{Q})$, by Lemma \ref{lem:rank}, $I^-(\alpha^-)$ is also a Lagrangian subspace of $\hat{H}_1^-(\hat{M};\mathbb{Q})$. As in \cite{forni:deviation}, one may continue with the perturbation procedure to obtain a Lagrangian subspace $I^\mathcal{C}$ of the symplectic subspace $\mathcal{C}$ by making similar perturbations in $\mathcal{C}^*$. Thus the case of a quadratic differential with at least two odd singularities is proved.

\begin{case}[Quadratic differentials with no odd singularities]
Suppose $q$ is a periodic quadratic differential with no odd singularities. In this case the only shortcoming is that the space $H^1_-(\hat{M};\mathbb{Q})$ is not big enough to provide enough perturbations to create a Lagrangian subspace in $H_1^+(\hat{M};\mathbb{Q})$. Specifically, since in this case $\dim H_1^-(\hat{M};\mathbb{Q}) = 2g - 2$, if we begin with a periodic quadratic differential with $\dim I(q) = k < g$ after $g-k -1$ iterations of the perturbative procedure described in the previous case we may get an isotropic subspace in $H_1^+(\hat{M};\mathbb{Q})$ of dimension $g-1$. At this point we are unable to perturb in $H^1_-(\hat{M};\mathbb{Q})$, so we perturb with elements of $H^1_-(\hat{M},\hat{\Sigma}_\kappa;\mathbb{Q})$ since it is this space which gives local coordinates to $\mathcal{Q}_\kappa$. As in the case of periodic quadratic differentials with odd singularities, it will be sufficient to show there is one with a Lagrangian foliation which is arbitrarily close. 
\end{case}

Suppose $q\in\mathcal{Q}_\kappa$ is a periodic quadratic differential on the genus $g$ surface $M$ in a stratum with no odd singularities and $k^- = \dim I^-(\alpha)<g-1$. Let $h\in H_1^-(\hat{M};\mathbb{Q})$ be a cycle such that $h\not\in I^-(\alpha)$ and $h\cap b = 0$ for all $b\in I^-(\alpha)$. Let $\bar{h}\in\ H_1^-(\hat{M};\mathbb{Z})$ be the unique (up to a sign) primitive integer multiple of $h$.

We can proceed to perturb $\alpha$ by the Poincar\'{e} dual to $\bar{h}$ (which by construction is an element of $H^1_-(\hat{M};\mathbb{Q})$) as in (\ref{eqn:nhoods}) and (\ref{eqn:forms}). In this case, we do not have to worry about making sure the perturbation is done by the dual of an element in $H_1^-(\hat{M};\mathbb{Q})$ since we have guaranteed this by construction in the preceding paragraph. Thus we obtain a new form $\alpha'$ with $\dim I^-(\alpha') > \dim I^-(\alpha)$ since the exact same arguments from Case 1 apply. After finitely many iterations of the previous perturbative procedure, each time with the Poincar\'{e} dual of an $\bar{h}$ as in the preceding paragraph, one can end up with a periodic foliation on $\hat{M}$ given by the Abelian differential $\alpha$ with $\dim  I^+(\alpha) = \dim  I^-(\alpha) = g-1$.  It could also happen that we obtain an Abelian differential with $g = \dim I^+(\alpha) > \dim I^-(\alpha) = g-1$ at which point the proposition would be proved for quadratic differentials with no odd singularities. In what follows, we treat the case $\dim I^\pm (\alpha) = g-1$.

Let $\{|a_1^+|,\dots,|a_{g-1}^+|,|a_1^-|,\dots,|a_{g-1}^-|\}$ be waistcurves of cylinders of the foliation given by $\alpha$ which represent a basis in homology for $I^+(\alpha)\oplus I^-(\alpha)$. Then 
$$\dot{\mathcal{M}} \equiv \hat{M}\backslash (|a_1^+| \cup \cdots \cup |a_{g-1}^+| \cup |a_1^-| \cup \cdots \cup |a_{g-1}^-|)$$
 is topologically a torus with $2g-2$ paired punctures coming from removing the waistcurves of cylinders. Let $p_1^+$ be a zero of $\alpha$ and $p^-_1 = \sigma(p^+_1)$.

Let $\gamma^+_1:[0,1]\rightarrow \hat{M}\backslash \mathcal{N}^1_\delta$, where $\mathcal{N}^1_\delta$ is a $\delta$ neighborhood of the punctures for some $\delta>0$, be a path on $\hat{M}$ such that 
\begin{equation}
\label{eqn:path}
\gamma^+_1(0) = p^-_1,\,\,\, \gamma^+_1(1) = p^+_1,\,\,\, \hat{\Sigma}_\kappa\cap \{\gamma_1^+(t)\}_{t\in(0,1)} = \varnothing,\,\,\, \mbox{and}\,\,\, 0\neq [\pi_\kappa \gamma^+_1]\in H_1(M;\mathbb{Z}). 
\end{equation}
Denote by $\gamma^-_1 = \sigma(\gamma^+_1)$ its image path satisfying $\gamma^\pm_1(\epsilon) = \gamma^\mp_1(1-\epsilon)$, $\epsilon \in \{ 0, 1\}$. Note that $0 \neq [\gamma^+_1\cup\gamma^-_1] \in H_1(\hat{M};\mathbb{Q})$ and $0 \neq P^-[\gamma^+_1] \in H_1(\hat{M},\hat{\Sigma};\mathbb{Q})$.

Let $\mathcal{U}^\pm_\varepsilon = B(p^\pm_1,\varepsilon)$ be two open $\varepsilon$-balls around $p^+_1$ and $p^-_1$ and $\mathcal{V}^\varepsilon_1$ a $\varepsilon$-tubular neighborhood around $\gamma^+_1\cup\gamma^-_1$. Let $f_1$ be a smooth function compactly supported in $\mathcal{V}^\varepsilon_1$ such that
\begin{equation}
\label{eqn:function}
f_1(x) = \left\{\begin{array}{ll}
0 & \mbox{on $\mathcal{U}^-_\varepsilon$} \\
1 & \mbox{on $\mathcal{U}^+_\varepsilon$} \end{array} \right.
\end{equation}
and $f_1^\pm \equiv P^\pm f_1$. 


Let $\alpha'_{r_1} = \alpha + r_1\cdot df_1^-$ for $r_1\in\mathbb{Q}$ sufficiently small. Since $f_1$ is constant inside $\mathcal{U}^\pm_\varepsilon$, $df_1^- = 0$ in a neighborhood of $p^\pm_1$, $\alpha'_{r_1}$ is still an Abelian differential with a periodic foliation. Moreover, since $\gamma^\pm_1$ is disjoint from the waistcurves $|a_\alpha^i|$ for $\varepsilon$ sufficiently small, the waistcurves $|a_\alpha^i|$ persist under the perturbation and are close and homologous to the waistcurves $|a_{\alpha'_{r_1}}^j|$ of the Abelian differential $\alpha'_{r_1}$. 

We claim not only that the foliation given by $\alpha'_{r_1}$ has more cylinders than the one given by $\alpha$, but that the waistcurve of at least one of these cylinders has non-zero intersection with $\gamma^+_1$. Since
\begin{equation}
\label{eqn:intersection'}
\int_{\gamma^+_1}\alpha'_{r_1} \neq 0,
\end{equation}
the claim follows from Lemma \ref{lem:intersection}. At this point either $\dim I(\alpha'_{r_1})>\dim I(\alpha)$ or $\dim I(\alpha'_{r_1})=\dim I(\alpha)$. If the former occurs, since $H_1^+(\hat{M};\mathbb{Q})$ and $H_1^-(\hat{M};\mathbb{Q})$ are symplectically orthogonal, $\dim H_1^-(\hat{M};\mathbb{Q}) = 2g-2$, and $\dim I^-(\alpha) = g-1$, this is equivalent to $\dim I^+(\alpha'_{r_1}) > \dim I^+(\alpha)$, and this completes the proof for quadratic differentials with no odd singularities.

Suppose $\dim I(\alpha'_{r_1}) = \dim I(\alpha)$. Let $c_{\alpha'_{r_1}}^*$ be a cylinder of the foliation given by $\alpha'_{r_1}$ such that $a_{\alpha'_{r_1}}^* \cap [\gamma^+_1] \neq 0$ in the sense of Lemma \ref{lem:intersection}. Clearly we have $a_{\alpha'_{r_1}}^*\cap a_{\alpha}^i = 0$ for any other waistcurve $a_\alpha^i$ of the foliation given by $\alpha$. 

Let $\mathcal{M}$ be a torus obtained by inserting $2g-2$ copies $\{D_i\}_{i=1}^{2g-2}$ of the two-disk to the punctures of $\dot{\mathcal{M}}$. Let $\theta_1$ be the \emph{closed} 1-form on $\mathcal{M}$ defined as
\begin{equation}
\label{eqn:closedform}
\theta_1 = \left\{\begin{array}{ll}
\alpha'_{r_1} & \mbox{on $\dot{\mathcal{M}}$} \\
\omega_i & \mbox{on $D_i$} \end{array} \right. ,
\end{equation}
where the $\omega_i$ are smooth forms outside finitely many singularities in the interior of each $D_i$ and are defined such that (\ref{eqn:closedform}) defines a smooth, closed form outside finitely many points. Then $\theta_1$ defines an orientable foliation on $\mathcal{M}$ which coincides with $\alpha'_{r_1}$ outside the inserted disks $D_i$. It follows from the Poincar\'{e}-Hopf index formula that if a simply connected, planar domain bounded by a periodic orbit of a vector field contains finitely many fixed points, the sum of the indices at every fixed point in the interior is equal to 1. In other words, denoting by $\iota_p(\theta)$ the index of the vector field (foliation) given by $\theta$ at the singularity $p$, we have
$$\sum_{p\in\mathrm{int}(D_i)}\iota_p(\theta) = 1$$
for any $i$ since $D_i$ is a simply connected, bounded planar domain. If $\dim I(\alpha'_{r_1}) = \dim I(\alpha)$ both the waistcurve $|a^*_{\alpha'_{r_1}}|$ and its image $\sigma |a^*_{\alpha'_{r_1}}|$ each bound a simply connected domain on $\mathcal{M}$. By (\ref{eqn:intersection'}), $p^+_1$ is contained in the interior of one of the two domains $B^+_1$ and $p^-_1$ in the other $B^-_1$. 
We claim that this finishes the proof for all differentials $q\in \mathcal{Q}_\kappa$ for $\kappa = \{4g-4\}$ for any $g>1$. Indeed, since $p^\pm_1$ were the only singularities of $\alpha$ and each was of negative index, by the Poincar\'{e}-Hopf index theorem,
\begin{equation}
\label{eqn:index}
0 = \chi(\mathcal{M}) = \sum_{p\in B^\pm_1} \iota_p(\theta_1) + \sum_{p\in (\mathcal{M}\backslash B^\pm_1)} \iota_p(\theta_1) = 2 + \sum_{p\in (\mathcal{M}\backslash B^\pm_1)} \iota_p(\theta_1) \geq 2,
\end{equation}
a contradiction. Thus neither $|a^*_{\alpha'_{r_1}}|$ or its image $\sigma |a^*_{\alpha'_{r_1}}|$ bound a simply connected domain, i.e.,  $\dim I(\alpha'_{r_1}) > \dim I (\alpha)$ and the proof is concluded in this case. 

After finitely many iterations of the above argument we can reach the same contradiction for any quadratic differential with no odd singularities. In fact, if $q\in Q_\kappa$ with $\kappa = \{n_1,\dots,n_\tau\}$ has no odd singularities, after no more than $\tau$ iterations, we reach the same contradiction. We show the argument for $\kappa = \{n_1,n_2\}$ with $n_1$, $n_2$ even and $n_1+n_2 = 4g-4$ for some $g>1$. For $\tau>2$, the argument is the same.

If after one iteration we do not reach a contradiction, we pick two other singularities $p^+_2$ and $p^-_2 = \sigma(p^+_2)$ of $\alpha'_{r_1}$ which are not in the interior of $B^\pm_1$ (if there are no such singularities, we reach the same contradiction through (\ref{eqn:index})). Define a path $\gamma_2^+:[0,1]\rightarrow \mathcal{M}\backslash \mathcal{N}^2_\delta$ as in (\ref{eqn:path}) where $\mathcal{N}^2_\delta$ is a $\delta$ neighborhood of the set $\{D_i\}_{i=1}^{2g-2}\cup B^\pm_1 \cup \mathcal{V}^\varepsilon_1$ for $\delta$ small enough. Let $f_2$ and $f^\pm_2$ be defined as in (\ref{eqn:function}) for $p^\pm_2$ and let $\theta_2 = \theta_1 + r_2\cdot df^-_2$ for a small enough $r_2\in \mathbb{Q}$. Then
$$\int_{\gamma^+_2}\theta_2 \neq 0$$
which, by Lemma \ref{lem:intersection}, implies there is a new cylinder given by the foliation which intersects $\gamma^+_2$. Note that we obtain the same form $\theta_2$ if we substitute the form $\alpha'_{r_2} = \alpha + r_1\cdot df_1^- + r_2\cdot df_2^-$ for $\alpha'_{r_1}$ in (\ref{eqn:closedform}), thus the new waistcurve given by $\theta_2$ also corresponds to a new waistcurve on $\hat{M}$ given by $\alpha'_{r_2}$.

If the waistcurve $|a_{\theta_2}|$ of this new cylinder represents a cycle which is homologous to zero, that is, if $\dim I(\alpha'_{r_2}) = \dim I(\alpha)$, then $|a_{\theta_2}|$ and its image $\sigma|a_{\theta_2}|$ bound simply connected domains $B^+_2$ and $B^-_2$ containing $p^+_2$ and $p^-_2$, respectively, on $\mathcal{M}$.  As in (\ref{eqn:index}),
\begin{eqnarray*}
0 = \chi(\mathcal{M}) = \sum_{i\in\{1,2\}}\sum_{p\in B^\pm_i} \iota_p(\theta_2) &+& \sum_{p\in (\mathcal{M}\backslash (B^\pm_1\cup B^\pm_2))} \iota_p(\theta_2) \\
= 2 &+& \sum_{p\in (\mathcal{M}\backslash (B^\pm_1\cup B^\pm_2))} \iota_p(\theta_2) \geq 2,
\end{eqnarray*}
since the only singularities of $\theta_2$ of negative index were in $B^\pm_1$ and $B^\pm_2$. Thus we get the same contradiction as in (\ref{eqn:index}). For an arbitrary stratum with no odd singularities, we can continue the same perturbation procedure with different anti-invariant relative cocycles which are dual to relative cycles connecting paired zeros at every step. After finitely many perturbations (no more than $\tau$) each zero of $\alpha$ (singularity of negative index) is contained in a simply connected domain of the foliation, which leads to a contradiction through the Poincar\'{e}-Hopf index formula. Thus, at some point of the perturbative procedure with relative, anti-invariant cycles, we obtain $\dim I^+(\alpha'_{r_i}) > \dim I^+(\alpha)$ and thus a Lagrangian foliation on $\hat{M}$.
\end{proof}

Finally we can prove the main theorem of this paper.
\begin{theorem}
\label{thm:main}
The Kontsevich-Zorich cocycle is non-uniformly hyperbolic $\hat{\mu}_\kappa$-almost everywhere on $\mathcal{H}_{\hat{\kappa}}$, where $\hat{\mu}_\kappa$ is the measure (\ref{eqn:measure}) supported on abelian differentials which come from non-orientable quadratic differentials through the double cover construction. The Lyapunov exponents satisfy
\begin{equation}
\label{eqn:spectrum}
1=\lambda_1>\lambda_2\geq\cdots\geq\lambda_g> 0 > \lambda_{g+1} = -\lambda_g \geq \cdots\geq \lambda_{2g-1} = -\lambda_2 > \lambda_{2g} = -1.
\end{equation}
\end{theorem}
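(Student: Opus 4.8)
The plan is to deduce Theorem~\ref{thm:main} directly from Forni's criterion (Theorem~\ref{thm:criterion}), applied to the measure $\hat\mu_\kappa$ of (\ref{eqn:measure}) on the stratum $\mathcal{H}_{\hat\kappa}\subset\mathcal{H}_{\hat g}$. After restricting to the hypersurface of area-one differentials, normalizing, and (if $\mathcal{Q}_\kappa$ is disconnected, cf.\ Lanneau \cite{lanneau:connected}) working on one connected component at a time, it suffices to check that $\hat\mu_\kappa$ is an $SL(2,\mathbb{R})$-invariant, ergodic, cuspidal, Lagrangian probability measure; the non-uniform hyperbolicity and the shape of the spectrum (\ref{eqn:spectrum}) (with $g$ there being the genus $\hat g$ of $\hat M$) are then exactly the output of Theorem~\ref{thm:criterion}. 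The $SL(2,\mathbb{R})$-invariance of $\hat\mu_\kappa$ was already recorded in Section~\ref{sec:quadratic}, and the deep part — that the support contains a holomorphic differential whose horizontal or vertical foliation is Lagrangian on $\hat M$ — is furnished, in the much stronger density form, by Proposition~\ref{prop:density}. So the remaining work is ergodicity, the completely periodic point, and local product structure.

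For \emph{ergodicity}: the double cover construction yields an $SL(2,\mathbb{R})$-equivariant embedding $i_\kappa$ intertwining the Teichm\"uller flows by (\ref{eqn:commute}); since by Masur \cite{Masur:IET} and Veech \cite{veech-teich} the measure $\mu_\kappa^{(A)}$ is $SL(2,\mathbb{R})$-ergodic (indeed $g_t$-ergodic) on each connected component of $\mathcal{Q}_\kappa^{(A)}$, its pushforward $\hat\mu_\kappa=(i_\kappa)_*\mu_\kappa$ is $SL(2,\mathbb{R})$-ergodic on the image of that component. For the \emph{completely periodic point}: periodic quadratic differentials are dense in $\mathcal{Q}_\kappa$, $\mathcal{F}_q$ is periodic iff $\hat{\mathcal{F}}_q$ is, and the Lagrangian differentials produced in the proof of Proposition~\ref{prop:density} are themselves periodic; thus for such a $q$ the abelian differential $\hat\alpha=\sqrt{\pi_\kappa^*q}=i_\kappa(q)$ lies in $i_\kappa(\mathcal{Q}_\kappa)\subseteq\operatorname{supp}\hat\mu_\kappa$, has completely periodic horizontal (say) foliation, and satisfies $\dim\mathcal{L}(\hat{\mathcal{F}}_q)=\hat g$, so that foliation is Lagrangian. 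Hence this single $\hat\alpha$ simultaneously witnesses the completely-periodic requirement and the Lagrangian requirement of Definition~\ref{def:cuspidal}.

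It remains to establish \emph{local product structure}. One works in the period coordinates of Section~\ref{sec:quadratic}: a neighbourhood of $[\hat\alpha]$ in $i_\kappa(\mathcal{Q}_\kappa)$ is modelled on $H^1_-(\hat M,\hat\Sigma_\kappa;\mathbb{C})\cong H^1_-(\hat M,\hat\Sigma_\kappa;\mathbb{R})\times H^1_-(\hat M,\hat\Sigma_\kappa;\mathbb{R})$ via $\hat\alpha'\mapsto(\mathrm{Re}\,\hat\alpha',\mathrm{Im}\,\hat\alpha')$, and $\hat\mu_\kappa$ is by definition the product of two copies of Lebesgue measure in these coordinates. The geometric point is that, although $i_\kappa(\mathcal{Q}_\kappa)$ is \emph{not} saturated by the ambient foliations $\mathcal{W}^\pm$ of $\mathcal{H}_{\hat\kappa}$, their traces $\mathcal{W}^+\cap i_\kappa(\mathcal{Q}_\kappa)$ and $\mathcal{W}^-\cap i_\kappa(\mathcal{Q}_\kappa)$ are precisely the two coordinate foliations above (holding $\mathrm{Im}$, resp.\ $\mathrm{Re}$, fixed inside $H^1_-$). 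Given this, for any product-type neighbourhood $\mathcal{U}\subset\mathcal{H}_{\hat\kappa}$ of a point of $\operatorname{supp}\hat\mu_\kappa$ and Borel sets $\Omega^\pm\subset\mathcal{U}$ with $\hat\mu_\kappa(\Omega^\pm)\neq0$, a Fubini argument identical to the one used for Masur--Veech measures in \cite{forni:deviation} and \cite{giovanni:criterion} — run now on the subvariety rather than on the full stratum — shows that the projection of $\Omega^-\cap\operatorname{supp}\hat\mu_\kappa$ to the $\mathrm{Re}$-factor and of $\Omega^+\cap\operatorname{supp}\hat\mu_\kappa$ to the $\mathrm{Im}$-factor each have positive Lebesgue measure in $H^1_-(\hat M,\hat\Sigma_\kappa;\mathbb{R})$, whence $\hat\mu_\kappa\bigl(\mathcal{W}^+_\mathcal{U}(\Omega^+)\cap\mathcal{W}^-_\mathcal{U}(\Omega^-)\bigr)\neq 0$. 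This gives the product structure on each such $\mathcal{U}$, hence local product structure, hence (together with the previous paragraph) cuspidality and the Lagrangian property, so Theorem~\ref{thm:criterion} applies and yields (\ref{eqn:spectrum}). I expect the genuine obstacle to be \emph{not} the hyperbolicity itself, since Proposition~\ref{prop:density} already carries out the hard combinatorial–geometric work, but rather this verification of the product structure for the \emph{singular} measure $\hat\mu_\kappa$: the subtlety flagged above is exactly that $i_\kappa(\mathcal{Q}_\kappa)$ fails to be $\mathcal{W}^\pm$-invariant, so the Masur--Veech case cannot be quoted verbatim and one must work inside the sub-locus cut out by the anti-invariance condition $\sigma^*\hat\alpha=-\hat\alpha$.
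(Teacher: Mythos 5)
Your proposal is correct and takes essentially the same route as the paper: verify that $\hat\mu_\kappa$ is $SL(2,\mathbb{R})$-invariant, ergodic, and cuspidal Lagrangian, then invoke Theorem~\ref{thm:criterion}, with Proposition~\ref{prop:density} supplying the Lagrangian periodic differential in the support. The paper's own proof is terser — it treats local product structure as immediate from the definition of $\hat\mu_\kappa$ via period coordinates and leaves the ergodicity (from Masur--Veech pushed through $i_\kappa$) implicit — whereas you spell out the Fubini check on the subvariety and correctly observe that the traces of $\mathcal{W}^\pm$ on $i_\kappa(\mathcal{Q}_\kappa)$ are exactly the coordinate subfoliations of $H^1_-(\hat M,\hat\Sigma_\kappa;\mathbb{R})\times H^1_-(\hat M,\hat\Sigma_\kappa;\mathbb{R})$, which is what makes the argument go through despite $i_\kappa(\mathcal{Q}_\kappa)$ not being $\mathcal{W}^\pm$-saturated.
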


Since the Kontsevich-Zorich cocycle defines two cocycles on the bundle over $i_\kappa(\mathcal{Q}_\kappa)\subset\mathcal{H}_{\hat{\kappa}}$, namely, the restriction of the cocycle to the invariant and anti-invariant sub-bundles (which are each invariant under the action of the cocycle), Theorem \ref{thm:main} implies we can express the Lyapunov exponents of the Kontsevich-Zorich cocycle of the invariant and anti-invariant sub-bundles as
$$1>\lambda_1^+ \geq \lambda_2^+ \geq \dots \geq \lambda_g^+ > 0 > -\lambda_g^+ = \lambda_{g+1}^+\geq\dots\geq \lambda_{2g}^+>-1$$
and
$$1=\lambda_1^- > \lambda_2^- \geq \dots \geq \lambda_{g+n-1}^- > 0 > -\lambda_{g+n-1}^- = \lambda_{g+n}^-\geq\dots> \lambda_{2g+2n-2}^-=-1$$
since, by the remark following Theorem \ref{thm:criterion}, the sub-bundles corresponding to the simple, extreme exponents are respectively generated by $[\mathrm{Re}\,\sqrt{\pi_\kappa^*q}]\cdot\mathbb{R}$ and $[\mathrm{Im}\,\sqrt{\pi_\kappa^*q}]\cdot\mathbb{R}$.

The question about the simplicity of the Lyapunov spectrum remains open. Examples of non-simple spectrum (in fact, degenerate spectrum, i.e., $\lambda_i=0$ for all $i\neq 1$) for other measures usually involve a certain set of symmetries (see \cite{FMZ} for a thorough discussion and examples) which are not present for Lebesgue almost all non-orientable quadratic differentials. The involution $\sigma$ splits the cocycle into two symplectic cocycles and it would be very surprising to find strong enough symmetries from the involution which would imply non-simplicity of the spectrum (\ref{eqn:spectrum}). Numerical experiments indeed show strong evidence for a simple spectrum. Thus we conjecture that for $\hat{\mu}_\kappa$-almost all quadratic differentials, the Kontsevich-Zorich cocycle has simple spectrum. We have approximated numerically the values of the exponents for several strata, which we summarize in the appendix.

We remark that Proposition \ref{prop:density} is stronger than needed to prove the result, as Forni's criterion needs \emph{one} Lagrangian differential in the support of the measure. It is thus possible to prove Theorem \ref{thm:main} through other methods by showing there is at least one Lagrangian differential in the support of the canonical measure such that not only $\mathcal{F}_q$ is Lagrangian on $M$, but also that $\hat{\mathcal{F}}_q$ is Lagrangian on $\hat{M}$. It seems that the tools from generalized permutations (see for example \cite{BoissyLanneau}) could be used to obtain such results, although we believe in such case it the hardest task would be to obtain a Lagrangian subspace $I^\mathcal{C}$ in the symplectic subspace $\mathcal{C}$ in the case of many odd singularities. In the same case, showing that $I^\pm$ are Lagrangian would not be a difficult task since, by Lemma \ref{lem:rank}, it suffices to obtain a Lagrangian foliation on $M$. The case of quadratic differentials with no odd singularities would most likely also have to be treated as a special case as well. We would be very interested to see whether Theorem \ref{thm:main} can be proved in such way (the tools and results of \cite{fickenscher} look particularly promising for this task).

\begin{proof}[Proof of Theorem \ref{thm:main}]
Since the measure (\ref{eqn:measure}) is the push-forward of a canonical measure which is locally equivalent to Lebesgue by the period map, it is easy to see that it has local product structure. By Proposition \ref{prop:density}, quadratic differentials $q$ such that $\hat{\mathcal{F}}_q$ is Lagrangian are dense in every stratum of $\mathcal{Q}_\kappa$ and thus the measure $\hat{\mu}_\kappa$ on $\mathcal{H}_{\hat{\kappa}}$ is cuspidal Lagrangian. The theorem then follows from Forni's criterion, Theorem \ref{thm:criterion}.
\end{proof}

\section{Deviation Phenomena}
\label{sec:deviations}
Let $M$ be a smooth, closed manifold and $X$ a smooth vector field on $M$ which generates a flow $\varphi_t$. For a point $p\in M$, let $c_T(p)\in H_1(M;\mathbb{R})$ be the cycle represented by closing the segment $\varphi_T(p)$ by a shortest path joining $\varphi_T(p)$ to $p$. For an ergodic measure $\mu$, invariant under $X$, and a point $p$ the support of $\mu$, the \emph{Schwartzman asymptotic cycle} \cite{schwartzman:cycle} is defined as
$$c_\mu^*\equiv \lim_{T\rightarrow \infty}\frac{c_T(p)}{T} \in H_1(M;\mathbb{R}).$$
The cycle $c_\mu^*$ is a sort of topological invariant of the flow $X$ with respect to the measure $\mu$ which can be regarded as a generalization of a rotation number since it coincides with the usual notion of rotation number for a minimal flow on a torus.

In the case when $M$ is an closed, orientable surface of genus $g>1$ endowed with a flat metric outside finitely many singular points and $X$ generates a (uniquely ergodic) translation flow (in other words, straight-line flow on a translation surface) on $M$, Zorich \cite{zorich-leaves} observed the following unexpected deviation phenomena through computational experiments. There are $g$ numbers $1=\lambda_1>\dots>\lambda_g>0$ and a filtration of subspaces
$$\langle c^*\rangle = F_1 \subset \cdots\subset F_g\subset H_1(\hat{M};\mathbb{R})$$
with $\dim F_i/F_{i-1} = 1$ such that, for $\phi\in\mathrm{Ann}(F_i)$ but $\phi\not\in\mathrm{Ann}(F_{i+1})$,
\begin{equation}
\label{eqn:deviationHomology}
\limsup_{T\rightarrow \infty}\frac{\log \|\langle \phi, c_T\rangle\|}{\log T} = \lambda_{i+1}
\end{equation}
Cycles which generate the subspaces $F_i$ are called \emph{Zorich cycles}. It was also proved that the numbers $\lambda_i$ actually coincide with the Lyapunov exponents (\ref{eqn:exponents}) of the Kontsevich-Zorich cocycle. In fact, he proved the following conditional statement.
\begin{theorem}[\cite{zorich-leaves}]
\label{thm:conditional}
Suppose the Kontsevich-Zorich cocycle is non-uniformly hyperbolic, i.e., $\lambda_1 \geq \dots\geq \lambda_g > 0$, and let $\lambda_1'>\dots>\lambda_s'>0$ be the different Lyapunov exponents. Then there exists a filtration of subspaces in $H_1(M;\mathbb{R})$
$$\langle c^*\rangle = F_1 \subseteq \cdots\subseteq F_s\subset H_1(\hat{M};\mathbb{R})$$
with $\dim F_i/F_{i-1} = \mbox{ multiplicity of }\lambda_i'$, $\dim F_s = g$ such that (\ref{eqn:deviationHomology}) holds. Moreover $[c_T(p)]$ remains within bounded distance of $F_s$ for almost every point $p$.
\end{theorem}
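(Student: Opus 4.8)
The plan is to renormalize the translation flow by the Teichmuller geodesic flow and to read the filtration $F_\bullet$ off the Oseledets decomposition of the Kontsevich--Zorich cocycle; equivalently, and as in \cite{zorich-leaves}, one may argue via Rauzy--Veech--Zorich induction on an interval exchange transversal to $X$. Represent $X$ as the vertical translation flow of an abelian differential $\alpha$ on $M$ and let $g_t$ be the Teichmuller flow, which rescales the vertical flow by $e^t$. A vertical orbit segment of length $T = e^t$ issued from $p$ on $\alpha$ then corresponds, on $g_t\alpha$, to a segment of length $1$; closing it up by a path whose length is bounded in terms of the geometry of $g_t\alpha$ produces a cycle $\hat c(t) \in H_1(g_t\alpha;\mathbb{R})$, and transporting it back to the fibre over $\alpha$ along the Kontsevich--Zorich cocycle gives
$$
c_T(p) \;=\; G_t^{-1}\,\hat c(t)\;+\;O(1), \qquad t = \log T,
$$
where $G_t^{-1}$ denotes the Gauss--Manin/mapping-class transport from the fibre over $g_t\alpha$ to that over $\alpha$, and the error is bounded in homological norm whenever $g_t\alpha$ stays in a fixed compact set. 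Passing to Zorich's acceleration --- i.e.\ sampling the renormalization only at the return times of $g_t\alpha$ to a compact cross-section --- one further arranges that $\hat c(t)$ ranges over a \emph{bounded} set of integral cycles, and that the number $n(T,p)$ of renormalization steps used satisfies $n(T,p)/\log T \to \mathrm{const} > 0$ for a.e.\ $p$, by Kac's formula and ergodicity of the cross-section map.

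Next I would apply the Oseledets theorem to the (now integrable) Kontsevich--Zorich cocycle on $H_1(M;\mathbb{R})$, whose Lyapunov exponents are, by construction, the Kontsevich--Zorich exponents: under the non-uniform hyperbolicity hypothesis these are $1 = \lambda_1' > \cdots > \lambda_s' > 0 > -\lambda_s' > \cdots > -\lambda_1'$ with multiplicities summing to $g$ on each side (so the neutral space is trivial on $H^1(M;\mathbb{R})$), and the top exponent is simple by the spectral gap of Veech and Forni. Let $V_{(j)}^-(\alpha) \subset H_1(M;\mathbb{R})$ be the Oseledets subspace of exponent $-\lambda_j'$ and set $F_i = \bigoplus_{j \le i} V_{(j)}^-(\alpha)$. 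Then $\dim F_i/F_{i-1}$ is the multiplicity of $\lambda_i'$, and $F_s = \bigoplus_j V_{(j)}^-(\alpha)$ is the $g$-dimensional forward-stable, hence Lagrangian, subspace of the cocycle. Since $G_t^{-1}$ expands the component of $\hat c(t)$ lying over $V_{(j)}^-(g_t\alpha)$ at rate $e^{\lambda_j' t}$ and contracts the complementary (unstable) part, $c_T(p) = G_t^{-1}\hat c(t) + O(1)$ stays within bounded distance of $F_s$, which is the last assertion; moreover $c_T(p)/T = e^{-t}G_t^{-1}\hat c(t) + o(1)$ converges to the nonzero vector $c_\mu^*$, which therefore spans $F_1 = V_{(1)}^-(\alpha)$.

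For $[\phi] \in \mathrm{Ann}(F_i) \setminus \mathrm{Ann}(F_{i+1})$, decompose $c_T(p) = \sum_j u_j(T) + (\text{bounded})$ with $u_j(T) \in V_{(j)}^-(\alpha)$ and $\|u_j(T)\| \asymp T^{\lambda_j'}$ whenever the corresponding component of $\hat c(t)$ is generic, which holds for a.e.\ $p$. Since $[\phi]$ annihilates $V_{(j)}^-(\alpha)$ precisely for $j \le i$ and pairs nontrivially with $V_{(i+1)}^-(\alpha)$, the term $\langle [\phi], u_{i+1}(T) \rangle \asymp T^{\lambda_{i+1}'}$ dominates the sum, so $\limsup_{T \to \infty} \log\|\langle [\phi], c_T \rangle\|/\log T = \lambda_{i+1}'$, which is \eqref{eqn:deviationHomology}.

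The main obstacle is quantitative rather than structural: Oseledets supplies the growth of the cocycle only along its own iteration count, so one must relate $n(T,p)$ to $\log T$ with $o(\log T)$ fluctuation and control both the bounded error terms and the size of the ``remainder tower'' containing $p$ \emph{uniformly in} $p$, in order to turn limits along the renormalization into genuine $\limsup$'s in $T$. This reduces to quantitative recurrence of the Teichmuller flow --- the logarithmic law, or a Borel--Cantelli argument against the Masur--Veech measure exploiting the tail behaviour of the Rauzy--Veech roof function. Everything else is Oseledets theory applied to the renormalization cocycle, with the deep dynamical input --- non-uniform hyperbolicity itself --- assumed.
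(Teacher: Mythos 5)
The theorem you are asked to prove is not proved in this paper at all: it is quoted verbatim from Zorich \cite{zorich-leaves} (note the bracketed citation in the theorem header) and used as a black box in Section~\ref{sec:deviations}. So there is no ``paper's own proof'' to compare against; the honest comparison is with Zorich's original argument, which is precisely the renormalization-plus-Oseledets strategy you outline.

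Your sketch is structurally the right one. The identification $c_T(p) = G_t^{-1}\hat c(t) + O(1)$ along return times to a compact cross-section, the observation that the Zorich acceleration makes $\hat c(t)$ range over a bounded set of integral cycles, and the reading-off of the filtration $F_i = \bigoplus_{j\le i}V_{(j)}^-(\alpha)$ from the Oseledets decomposition (with $F_s$ the forward-stable, Lagrangian half, so that the $V^+$-components of $\hat c(t)$ get killed by $G_t^{-1}$ and $c_T$ stays a bounded distance from $F_s$) is exactly how Zorich's proof is organized. You also correctly note that the simplicity of $\lambda_1 = 1$ (the Veech--Forni spectral gap, which is unconditional) is what makes $F_1 = \langle c^*\rangle$ one-dimensional, even though the statement only assumes non-uniform hyperbolicity.

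Where your account is genuinely incomplete --- and you candidly flag this --- is the lower bound in the $\limsup$. Oseledets tells you that the $V_{(j)}^-$-component of $G_t^{-1}\hat c(t)$ grows at rate at most $T^{\lambda_j'}$ up to subexponential error, which gives $\limsup \le \lambda_{i+1}'$ once you know $\phi$ annihilates $F_i$. To get $\limsup = \lambda_{i+1}'$ you must show that for a.e.\ $p$ the projection of $\hat c(t)$ to $V_{(i+1)}^-(g_t\alpha)$ is bounded away from zero along a set of return times of positive density. Writing ``whenever the corresponding component of $\hat c(t)$ is generic, which holds for a.e.\ $p$'' asserts the conclusion of the main lemma rather than proving it; in Zorich's paper this is where the hard combinatorics of Rauzy--Veech--Zorich induction enters (one shows the approximating integer vectors eventually have nontrivial components in every Lyapunov direction). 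Likewise, relating the iteration count $n(T,p)$ to $\log T$ with $o(\log T)$ fluctuation is nontrivial and rests on integrability of the Zorich roof function. These two pieces are not ``quantitative bookkeeping'' around an essentially finished argument --- they are the substance of the theorem, and a proof that treats them as remarks to be filled in later has not actually proved the statement. Within the scope of what this paper needs, however, none of that matters: the result is imported, not re-derived, and your sketch correctly captures the mechanism it rests on.
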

Based on the computer experiments, it was conjectured by Kontsevich and Zorich that for the canonical measure on the moduli space of orientable quadratic differentials, the Kontsevich-Zorich cocycle is non-uniformly hyperbolic and has a simple spectrum. This became known as the \emph{Kontsevich-Zorich conjecture} \cite{kontsevich:hodge}. It was also conjectured that similar deviations should hold for ergodic averages of smooth functions. Specifically, it was conjectured that for a smooth function $f$ and large $T$,
\begin{equation}
\label{eqn:deviationAverages}
\left|\int_0^T f\circ\varphi_t(p)\, dt \right| \approx T^{\lambda_{i+1}}
\end{equation}
for almost every $p$ on a codimension $i$ subspace in some space of functions.

The non-uniform hyperbolicity of the Kontsevich-Zorich cocycle was first proved in \cite{forni:deviation}. There it was proved that the deviation of ergodic averages is in fact described  by the exponents of the Kontsevich-Zorich cocycle and that $\lambda_g>0$, but the simplicity of the spectrum was not proved for surfaces of genus greater than two. The full conjecture, that is, that the spectrum of the cocycle is simple and that $\lambda_g > 0$, was proved by Avila and Viana \cite{AvilaViana}. We now recall the precise results on deviations of ergodic averages from \cite[\S6-\S9]{forni:deviation}.

Let $X_\alpha$ be a vector field on a surface $M$ of genus $g$ which is tangent to the horizontal foliation of an abelian differential $\alpha$. Let $\mathcal{I}_{X_\alpha}^1(M)$ denote the vector space of $X_\alpha$-invariant distributions (in the sense of Schwartz), i.e., distributional solutions $\mathcal{D}\in H^{-1}(M)$ of the equation $X_\alpha \mathcal{D} = 0$, where $H^{-1}(M)$ is the dual space of the Sobolev space $H^1(M)$.
\begin{theorem}[\cite{forni:deviation}]
\label{thm:forni}
For Lebesgue-almost all abelian differentials $\alpha$ the space $\mathcal{I}_{X_\alpha}^1(M)$ has dimension $g$ and there exists a splitting
$$\mathcal{I}^1_{X_\alpha}(M) = \mathcal{I}^1_{X_\alpha}(\lambda_1')\oplus\dots\oplus\mathcal{I}^1_{X_\alpha}(\lambda_s')$$
where $\dim \mathcal{I}^1_{X_\alpha}(\lambda_i') = \mbox{multiplicity of }\lambda_i'$ for the $i^{th}$ distinct Lyapunov exponent of Kontsevich-Zorich cocycle. Denoting by $\varphi_t$ the flow of $X_\alpha$, for any function $f\in H^1(M)$ such that
$$\mathcal{D}f=0\hspace{.4 in}\mbox{ for all }\hspace{.4 in}\mathcal{D}\in\mathcal{I}^1_{X_\alpha}(\lambda_1')\oplus\dots\oplus\mathcal{I}^1_{X_\alpha}(\lambda_i'),$$
and if there exists a $\mathcal{D}_{i+1}\in\mathcal{I}^1_{X_\alpha}(\lambda_{i+i}')\backslash\{0\}$ such that $\mathcal{D}_{i+1}f\neq 0$,
then, if $0<i<s$, for almost every $p\in M$, 
$$\limsup_{T\rightarrow\infty}\frac{\log|\int_0^Tf\circ\varphi_t(p)\,dt|}{\log T}= \lambda_{i+1}'.$$
If $\mathcal{D}f=0$ for all $\mathcal{D}\in\mathcal{I}^1_{X_\alpha}$, then for any $p$ not contained in a singular leaf, 
$$\limsup_{T\rightarrow\infty}\frac{\log|\int_0^Tf\circ\varphi_t(p)\,dt|}{\log T}=0.$$
\end{theorem}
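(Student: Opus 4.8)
The plan is to follow Forni's functional-analytic renormalization scheme, reducing the deviation statement to the Lyapunov dynamics of the Kontsevich--Zorich cocycle studied above. First I would set up the weighted Sobolev spaces $H^s(M)$ attached to the flat metric $|\alpha|$ with conical singularities at the zeros of $\alpha$, together with the commuting ``horizontal'' and ``vertical'' vector fields $X_\alpha$, $X_\alpha^\perp$ and the Cauchy--Riemann operators $\partial_\alpha^\pm = \tfrac{1}{2}(X_\alpha \mp i X_\alpha^\perp)$. The space $\mathcal{I}^1_{X_\alpha}(M)$ of $X_\alpha$-invariant distributions of Sobolev order at most $1$ is then identified, by pairing a distributional solution of $X_\alpha\mathcal{D}=0$ with closed $1$-forms, with a $g$-dimensional subspace of the real cohomology $H^1(M;\mathbb{R})$. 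The technical core at this stage is to prove that for Lebesgue-almost every $\alpha$ this subspace has dimension exactly $g$ and that the invariant distributions have Sobolev order precisely $1$. This is where non-uniform hyperbolicity enters: without the positivity $\lambda_g>0$ supplied by Theorem~\ref{thm:criterion} the dimension could jump (as it does at special flat surfaces carrying extra invariant distributions).

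Next I would establish the Lyapunov-graded structure of $\mathcal{I}^1_{X_\alpha}(M)$. The Teichm\"uller flow renormalizes the translation flow, $\varphi^\alpha_t$ being conjugate to $\varphi^{g_s\alpha}_{e^{-s}t}$, and the induced action on invariant distributions is governed by the Kontsevich--Zorich cocycle restricted to the $g$-dimensional subspace of $H^1(M;\mathbb{R})$ above. Applying the Oseledets theorem to this restricted cocycle produces the splitting $\mathcal{I}^1_{X_\alpha}(M) = \mathcal{I}^1_{X_\alpha}(\lambda'_1)\oplus\cdots\oplus\mathcal{I}^1_{X_\alpha}(\lambda'_s)$, with $\dim \mathcal{I}^1_{X_\alpha}(\lambda'_i)$ equal to the multiplicity of $\lambda'_i$ and with elements of $\mathcal{I}^1_{X_\alpha}(\lambda'_i)$ having norm decaying like $e^{-\lambda'_i s}$ along the renormalization (after the canonical normalization).

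For the deviation estimate itself I would run the renormalization argument. Given a large $T$, choose $s\approx\log T$ with $e^{-s}T$ bounded, so that the orbit segment of length $T$ for $\alpha$ corresponds to a bounded-length segment for $g_s\alpha$; the obstruction to solving the cohomological equation $X_{g_s\alpha}u=f$ with controlled bounds is measured precisely by $\mathcal{I}^1_{X_{g_s\alpha}}(M)$. If $f$ is annihilated by $\mathcal{I}^1_{X_\alpha}(\lambda'_1)\oplus\cdots\oplus\mathcal{I}^1_{X_\alpha}(\lambda'_i)$, then along the Teichm\"uller orbit the surviving obstruction lies in the ``slow'' part $\bigoplus_{j>i}\mathcal{I}^1_{X_\alpha}(\lambda'_j)$, whose norm grows no faster than $e^{\lambda'_{i+1}s}$; telescoping the bounded-length contributions over the renormalization steps yields $\limsup_{T\to\infty}\log|\int_0^T f\circ\varphi_t(p)\,dt|/\log T \le \lambda'_{i+1}$ for a.e.\ $p$. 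The matching lower bound, given a $\mathcal{D}_{i+1}\in\mathcal{I}^1_{X_\alpha}(\lambda'_{i+1})$ with $\mathcal{D}_{i+1}f\neq 0$, follows from a quantitative recurrence (Borel--Cantelli type) argument: the Teichm\"uller orbit of $\alpha$ returns infinitely often to a compact set on which the normalized $\mathcal{D}_{i+1}$ is uniformly bounded below, producing times $T_n\to\infty$ with $|\int_0^{T_n}f\circ\varphi_t(p)\,dt|\gtrsim T_n^{\lambda'_{i+1}-\epsilon}$. Finally, when $\mathcal{D}f=0$ for all $\mathcal{D}\in\mathcal{I}^1_{X_\alpha}(M)$ one is in the coboundary case: the cohomological equation $X_\alpha u=f$ has a solution $u$ bounded off the singular leaves, so the ergodic integral stays bounded and the $\limsup$ is $0$.

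The hard part is the regularity theory underlying the first step: proving $\dim\mathcal{I}^1_{X_\alpha}(M)=g$ for a.e.\ $\alpha$, that invariant distributions have Sobolev order exactly $1$, and the sharp a priori estimates on solutions of $X_\alpha u=f$ in terms of the invariant distributions. These estimates are what make the renormalization run, and they rest on a delicate analysis of the Cauchy--Riemann operators $\partial_\alpha^\pm$ on the flat surface, together with the positivity of $\lambda_g$ from Theorem~\ref{thm:criterion}. The generic non-cancellation needed for the lower bound is the secondary obstacle, requiring quantitative recurrence of the Teichm\"uller flow to compact sets where the relevant invariant distribution is nondegenerate.
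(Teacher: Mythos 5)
This statement is imported verbatim from Forni's 2002 paper \cite{forni:deviation} and is quoted in the present paper without proof, so there is no ``paper's own proof'' to compare against; your sketch is therefore being measured against Forni's original argument rather than anything in this text. That said, your outline correctly captures the broad architecture of that argument: the weighted Sobolev theory for the flat Laplacian with conical singularities, the Cauchy--Riemann operators $\partial_\alpha^\pm$, the identification of $\mathcal{I}^1_{X_\alpha}(M)$ with a $g$-dimensional subspace of $H^1(M;\mathbb{R})$ via basic currents, the Oseledets grading coming from Teichm\"uller renormalization, and the split into an upper bound (telescoping the cohomological-equation estimate along renormalization times) and a lower bound (quantitative recurrence to a compact set where the distribution $\mathcal{D}_{i+1}$ is uniformly nondegenerate).

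One point is logically reversed. You invoke ``the positivity $\lambda_g > 0$ supplied by Theorem~\ref{thm:criterion}'' as an ingredient, but Theorem~\ref{thm:criterion} (Forni's criterion from \cite{giovanni:criterion}) is a \emph{later} abstraction of the very techniques developed to prove the present theorem; the 2002 proof establishes $\lambda_g > 0$ for the canonical absolutely continuous measure directly, via the variational formulas for the second fundamental form of the Hodge bundle, and does not and cannot presuppose the criterion. Beyond this anachronism the sketch is accurate as a roadmap, though of course it leaves implicit the genuinely hard analytic content: the sharp a priori estimates for $X_\alpha u = f$ in terms of the invariant distributions, the proof that the distributions have Sobolev order exactly $1$ (not less), and the fact that $\dim \mathcal{I}^1_{X_\alpha}(M) = g$ requires both the cocycle regularity (to get at most $g$) and the construction of $g$ independent basic currents from the Oseledets filtration (to get at least $g$). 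These are where most of the work in \cite{forni:deviation} lives, and a full proof would have to supply them.
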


A \emph{basic current} $C$ for $\mathcal{F}$ is a current (in the sense of de Rham) of dimension and degree equal to one such that for all vector fields $X$ tangent to $\mathcal{F}$ we have
$$i_X C = \mathcal{L}_X C = 0.$$
Let $\mathcal{B}^s_q$ be the space of currents for $\mathcal{F}_q^h$ of order $s$. It was proved in \cite{forni:deviation} that the space $\mathcal{I}^1_{X_\alpha}$ is in bijection with the subspace $\mathcal{B}^1_{q,+}\subset\mathcal{B}^1_q$ of \emph{closed} currents which are not exact. In fact, $C\in\mathcal{B}^1_{q,+}$ if and only if $C\wedge [\mathrm{Im}\,\alpha]\in\mathcal{I}^1_{X_\alpha}$. There is an analogous splitting of the space $\mathcal{B}^1_{q,+}$:
$$\mathcal{B}^1_{q,+} = \mathcal{B}^1_{q,+}(\lambda_1')\oplus\dots\oplus\mathcal{B}^1_{q,+}(\lambda_s')$$
with respect to the Lyapunov spectrum of the Kontsevich-Zorich cocycle. Let 
$$\Pi^i_q:\mathcal{B}^1_{q}\longrightarrow\mathcal{B}^1_{q,+}(\lambda_i')$$
be the projection to the $i^{th}$ summand of the splitting. The invariant distributions which generate each $\mathcal{I}^1_{X_\alpha}(\lambda_i')$ are constructed from the asymptotic currents as follows. There is a sequence of times $T_k\rightarrow \infty$ such that
\begin{equation}
\label{eqn:asCurrent}
\mathcal{D}_i\equiv \lim_{k\rightarrow\infty} \frac{\Pi^i_q\, \ell_{T_k}\wedge[\mathrm{Im}\,\alpha]}{| \Pi^i_q\,\ell_{T_k}|_{-1}} = \lim_{k\rightarrow\infty} \frac{\Pi^i_q\, \ell_{T_k}}{| \Pi^i_q\,\ell_{T_k}|_{-1}}\wedge[\mathrm{Im}\,\alpha] = C_i\wedge[\mathrm{Im}\,\alpha]\in\mathcal{I}^1_{X_\alpha}(\lambda_i'),
\end{equation}
are the invariant distributions, where $\ell_T$ is the current defined by a segment of a leaf of $\mathcal{F}^h_q$ (a chain) of length $T$. Furthermore,
\begin{equation}
\label{eqn:currNorm}
\limsup_{T\rightarrow\infty}\frac{\log|\Pi^i_q\,\ell_{T}|_{-1}}{\log T}= \lambda_i'.
\end{equation}
Thus, the basic currents $C_i$ in (\ref{eqn:asCurrent}) are the Zorich cycles which generate the subspaces $F_i$ in Theorem \ref{thm:conditional}. In fact, there is a representation theorem of Zorich cycles which states that all Zorich cycles are represented by basic currents of order 1 \cite[Theorem 8.3]{forni:deviation}.

Any element of the spectrum of the Kontsevich-Zorich cocycle for the canonical measure in the moduli space of abelian differentials describes deviations of both homology cycles as well as that of ergodic averages. For the case of non-orientable quadratic differentials, it is surprisingly not the same.

\subsection{Deviations in homology for quadratic differentials}

Let $q\in \mathcal{Q}_\kappa$ be a quadratic differential on $M$ which is an Oseledets-regular point with respect to the measure (\ref{eqn:measure}) for the Kontsevich-Zorich cocycle. Let $\hat{M}$ the orienting double cover and $\alpha = \sqrt{\pi_\kappa^* q}$. For a point $p\in M$ on a minimal leaf $\ell$ of $\mathcal{F}_q$ and picking a local direction, we can follow a segment of length $T$, $\ell_T$, of the leaf $\ell$ in such direction. Let $c_T\in H_1(M;\mathbb{R})$ be the cycle obtained by closing the chain $\ell_T$ by a short path. 

For a point $\hat{p}\in\pi_\kappa^{-1}(p)$, following a leaf $\hat{\ell}_T$ of length $T$ of the foliation $\hat{\mathcal{F}}_q$ such that $\pi_\kappa \hat{\ell}_T = \ell_T$, let $\hat{c}_T\in H_1(\hat{M};\mathbb{R})$ be the cycle obtained by closing the chain $\hat{\ell}_T$ by a short path. Then
$$\hat{c}_q^*\equiv \lim_{T\rightarrow \infty}\frac{\hat{c}_T}{T} \in H_1^-(\hat{M};\mathbb{R})$$
is the Schwartzman asymptotic cycle. It is anti-invariant with respect to $\sigma_*$ since it can be shown to be the Poincar\'{e} dual of the cohomology class defining the foliation, in this case either $[\mathrm{Re}(\alpha)]$ or $[\mathrm{Im}(\alpha)]$. By construction, $\pi_{\kappa*} \hat{c}_q^* = c_q^*$. By Theorem \ref{thm:main}, the Kontsevich-Zorich cocycle is non-uniformly hyperbolic with respect to the measure $\hat{\mu}_\kappa$ supported on $i_\kappa(\mathcal{Q}_\kappa)\subset\mathcal{H}_{\hat{\kappa}}$. Let $1=\lambda_1^-> \lambda_2^-\geq \cdots \geq \lambda_{2g+n-1}^->0$ and $\lambda_1^+\geq\cdots\geq\lambda_g^+>0$ be the positive Lyapunov exponents of the restriction of the cocycle to the anti-invariant and invariant sub-bundles, respectively. By Theorem \ref{thm:conditional}, for large $T$,
$$\hat{c}_T \approx \underbrace{\hat{c}_q^* T + c_2^- T^{\lambda_2^-}+\cdots}_{\mbox{coming from }H_1^-(\hat{M};\mathbb{R})}+ \underbrace{ c_1^+ T^{\lambda_1^+} + c_2^+ T^{\lambda_2^+}+ \cdots}_{\mbox{coming from }H_1^+(\hat{M};\mathbb{R}) }.$$
Since $\pi_{\kappa*}\hat{c}_T = c_T$ and $\ker\pi_{\kappa*} = H_1^-(\hat{M};\mathbb{R})$,
\begin{equation}
\label{eqn:cycle}
c_T \approx \pi_{\kappa*}(c_1^+) T^{\lambda_1^+} + \pi_{\kappa*}(c_2^+) T^{\lambda_2^+}+ \cdots.
\end{equation}
If we define the Schwartzman asymptotic cycle for the non-orientable foliations on $M$ as
$$c_q^*\equiv \lim_{T\rightarrow \infty}\frac{c_T}{T} \in H_1(M;\mathbb{R}),$$
then, by (\ref{eqn:cycle}), it is well-defined and equal to zero. Thus the deviation of homology classes is sublinear and described completely by invariant behavior. The result is summarized in the following theorem.

\begin{theorem}[Deviations in homology for a typical leaf of a quadratic differential]
\label{thm:deviationHomology}
For Lebesgue-almost all quadratic differentials $q\in \mathcal{Q}_g$ on $M$, there exists a filtration of subspaces 
$$F_1\subset \cdots \subset F_s \subset H_1(M;\mathbb{R})$$
with $\dim F_i/F_{i-1} = \mbox{multiplicity of }\lambda^+_i$ and $F_s$ a Lagrangian subspace, such that, for $\phi\in\mathrm{Ann}(F_i)$ but $\phi\not\in\mathrm{Ann}(F_{i+1})$,
$$\limsup_{T\rightarrow \infty}\frac{\log \| \langle \phi, c_T \rangle \|}{\log T} = \lambda_{i+1}^+$$
where $c_T$ is obtained by closing a non-singular leaf $\ell_T$ of length $T$ by a short segment and $\lambda_1^+>\cdots>\lambda_s^+>0$ are the distinct Lyapunov exponents of the Kontsevich-Zorich cocycle with respect to the measure coming from quadratic differentials, restricted to the invariant sub-bundle $H^1_+(\hat{M};\mathbb{R})$.
\end{theorem}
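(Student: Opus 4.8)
The plan is to deduce Theorem \ref{thm:deviationHomology} from Zorich's deviation theorem (Theorem \ref{thm:conditional}) applied on the orienting double cover $\hat M$ and then transported to $M$ via $\pi_{\kappa*}$. The three structural facts that make this work are already available: the Kontsevich--Zorich cocycle is non-uniformly hyperbolic for $\hat\mu_\kappa$ (Theorem \ref{thm:main}); the splitting $H_1(\hat M;\mathbb R)=H_1^+(\hat M;\mathbb R)\oplus H_1^-(\hat M;\mathbb R)$ is invariant under the cocycle; and $\pi_{\kappa*}$ annihilates $H_1^-(\hat M;\mathbb R)$ while restricting to an isomorphism of $H_1^+(\hat M;\mathbb R)$ onto $H_1(M;\mathbb R)$ that is symplectic up to a positive scalar, hence carries Lagrangian subspaces to Lagrangian subspaces.

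First I would pin down the full-measure set: let $q\in\mathcal Q_\kappa$ range over those differentials for which $\mathcal F_q$ is uniquely ergodic and $i_\kappa(q)$ is an Oseledets-regular point for the cocycle with respect to $\hat\mu_\kappa$. Since $\hat\mu_\kappa=(i_\kappa)_*\mu_\kappa$ and $\mu_\kappa$ lies in the Lebesgue class, this set is of full Lebesgue measure in $\mathcal Q_\kappa$. For such $q$, the lifted foliation $\hat{\mathcal F}_q$ is the orientable, minimal horizontal foliation of $\alpha=\sqrt{\pi_\kappa^*q}$, so Theorem \ref{thm:conditional} applies on $\hat M$ and supplies a filtration of $H_1(\hat M;\mathbb R)$ by sums of Oseledets subspaces together with the deviation estimate (\ref{eqn:deviationHomology}) for the closed lift $\hat c_T$ of $c_T$, which satisfies $\pi_{\kappa*}\hat c_T=c_T$ up to a uniformly bounded cycle (the short closing path downstairs being the projection of the one upstairs).

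Next I would push the structure down. Because the $\pm$-splitting is cocycle-invariant, each Oseledets subspace $E$ on $\hat M$ decomposes as $(E\cap H_1^+)\oplus(E\cap H_1^-)$; applying $\pi_{\kappa*}$ kills the $H_1^-$ part, so $c_T=\pi_{\kappa*}P^+\hat c_T$ (up to a bounded cycle) inherits an asymptotic expansion whose growth exponents are exactly the invariant exponents $\lambda_j^+$, making rigorous the heuristic expansion (\ref{eqn:cycle}) and, in particular, forcing the Schwartzman cycle of $\mathcal F_q$ on $M$ to vanish (consistent with $\lambda_1^+<1$). I then set $F_i\equiv\pi_{\kappa*}\big(\bigoplus_{j\le i}E_j^+\cap H_1^+(\hat M;\mathbb R)\big)\subset H_1(M;\mathbb R)$, where $E_1^+,\dots,E_s^+$ are the Oseledets subspaces for the distinct positive exponents $\lambda_1^+>\cdots>\lambda_s^+>0$. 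Then $\dim F_i/F_{i-1}$ is the multiplicity of $\lambda_i^+$, the inclusions are strict, and $F_s=\pi_{\kappa*}\big(E^+\cap H_1^+(\hat M;\mathbb R)\big)$ is the image of the unstable subspace of the symplectic cocycle on $H_1^+(\hat M;\mathbb R)$; since all $g$ of its exponents are strictly positive (Theorem \ref{thm:main}), that subspace is $g$-dimensional and isotropic, hence Lagrangian, and therefore $F_s$ is a Lagrangian subspace properly contained in $H_1(M;\mathbb R)$.

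Finally, for $\phi\in\mathrm{Ann}(F_i)\setminus\mathrm{Ann}(F_{i+1})\subset H^1(M;\mathbb R)$, I would use the adjunction $\langle\phi,c_T\rangle=\langle\pi_\kappa^*\phi,\hat c_T\rangle$ together with $\langle\pi_\kappa^*\phi,h^-\rangle=\langle\phi,\pi_{\kappa*}h^-\rangle=0$ for every $h^-\in H_1^-(\hat M;\mathbb R)$, so that $\langle\phi,c_T\rangle=\langle\pi_\kappa^*\phi,P^+\hat c_T\rangle$; by construction $\pi_\kappa^*\phi$ annihilates $\bigoplus_{j\le i}E_j^+\cap H_1^+$ (and the anti-invariant Oseledets spaces automatically), but pairs nontrivially with the $\lambda_{i+1}^+$-summand, so Theorem \ref{thm:conditional}---whose estimate (\ref{eqn:deviationHomology}) is an equality, which also supplies the matching lower bound---yields $\limsup_{T\to\infty}\log\|\langle\phi,c_T\rangle\|/\log T=\lambda_{i+1}^+$. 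This theorem presents no single sharp obstacle once Theorem \ref{thm:main} is in hand; the expected difficulty is the bookkeeping---verifying that $F_\bullet$ really is the $\pi_{\kappa*}$-image of the invariant part of the upstairs filtration, that $\pi_\kappa^*$ identifies the relevant annihilators, and that the asymptotics of $P^+\hat c_T$ are governed purely by the invariant spectrum $\{\lambda_j^+\}$.
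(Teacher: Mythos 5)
Your proof is correct and follows essentially the same route as the paper: apply Zorich's conditional deviation theorem (Theorem~\ref{thm:conditional}) on the orienting double cover $\hat M$ (made applicable by Theorem~\ref{thm:main}), use the cocycle-invariance of the $\pm$ splitting, and push down by $\pi_{\kappa*}$, which kills $H_1^-(\hat M;\mathbb R)$ and restricts to an isomorphism (symplectic up to scale) from $H_1^+(\hat M;\mathbb R)$ onto $H_1(M;\mathbb R)$. The paper gives this as an informal discussion leading to the heuristic expansion (\ref{eqn:cycle}); your write-up is the same argument made precise, in particular by defining $F_i$ as $\pi_{\kappa*}$ of the partial sums of the invariant Oseledets subspaces and by using the adjunction $\langle\phi,c_T\rangle=\langle\pi_\kappa^*\phi,\hat c_T\rangle$ together with $\pi_\kappa^*\phi\in H^1_+(\hat M;\mathbb R)=\mathrm{Ann}\,H_1^-(\hat M;\mathbb R)$, which is the clean way to see that only the invariant exponents contribute.
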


\subsection{Deviation of ergodic averages for quadratic differentials}
Let $q\in \mathcal{Q}_\kappa$ be a quadratic differential on $M$ which is an Oseledets-regular point with respect to the measure (\ref{eqn:measure}) for the Kontsevich-Zorich cocycle. Let $\hat{M}$ be the orienting double cover and $\alpha = \sqrt{\pi_\kappa^* q}$. For a point $p\in M$ on a minimal leaf of $\mathcal{F}_q$, let $\varphi_t(p)$ be the ``flow'' obtained by integrating the distribution defining the horizontal foliation in a chosen direction and starting at $p$. As such, $\bigcup_{t=0}^T\varphi_t(p)$ is a segment $\ell_T$ of a leaf of the horizontal foliation $\mathcal{F}_q$ of length $T$ with an endpoint $p$. Then for a smooth function $f$,
\begin{equation}
\label{eqn:flow}
\int_0^T f\circ \varphi_s(p)\, ds
\end{equation}
is well defined. Let $\hat{f} = \pi_\kappa^* f$ be a smooth function on $\hat{M}$. Then
$$\int_0^T f\circ \varphi_s(p)\,ds = \int_0^T \hat{f}\circ\hat{\varphi}_s(\hat{p})\,ds,$$
for the flow $\hat{\varphi}_t(p)$ defined by the orientable horizontal foliation $\hat{\mathcal{F}}_q$ for a point $\hat{p}\in\pi_\kappa^{-1}(p)$. Moreover,
\begin{equation}
\label{eqn:averages}
\int_0^T f\circ \varphi_s(p)\,ds = \int_0^T \hat{f}\circ\hat{\varphi}_s(\hat{p})\,ds = \langle \ell_T, \hat{f}\cdot \mathrm{Im}\,\alpha\rangle = \int_{\ell_T}\hat{f}\cdot\mathrm{Im}\,\alpha.
\end{equation}
For the space of invariant distributions $\mathcal{I}^1_q(\hat{M})$, let $\mathcal{I}^\pm_q \equiv P^\pm\mathcal{I}^1_q(\hat{M})$. 
By \cite{forni:deviation}, there is a splitting of the closed, non-exact basic currents of order one 
$$\mathcal{B}^1_{q}=\mathcal{B}^+_q\oplus\mathcal{B}^-_q=\mathcal{B}^+_q(\lambda^+_1)\oplus\dots\oplus\mathcal{B}^+_q(\lambda^+_{s^+})\oplus\mathcal{B}^-_q(\lambda^-_1)\oplus\dots\oplus\mathcal{B}^-_q(\lambda^-_{s^-})$$
into the components corresponding to the Lyapunov exponents coming from the restriction of the cocycle to the invariant and anti-invariant sub-bundles, respectively. Let $\Pi^i_\pm:\mathcal{B}^1_q\longrightarrow\mathcal{B}^\pm_q(\lambda^\pm_i)$. For an invariant distribution $\mathcal{D} = C\wedge\mathrm{Im}\,[\alpha]$, since $[\alpha]\in H^1_-(\hat{M},\hat{\Sigma}_\kappa;\mathbb{R})$, $\mathcal{D}\in\mathcal{I}^\pm_q$ if and only if $C\in\mathcal{B}^\mp_q$.

If $\mathcal{D}\in\mathcal{I}^-_q$, $\mathcal{D}(\hat{f}) = 0$ for $\hat{f}=\pi_\kappa^* f$. Then, by (\ref{eqn:asCurrent}), (\ref{eqn:currNorm}), and (\ref{eqn:averages}), for large $T$,
\begin{eqnarray*}
\int_0^T f\circ\varphi_s(p)\,ds &\approx& \sum_{i=1}^{s^-}\langle \Pi^i_-\,\ell_T,\hat{f}\cdot\mathrm{Im}\,\alpha\rangle\cdot T^{\lambda^-_i} + \sum_{i=1}^{s^+}\langle \Pi^i_+\,\ell_T,\hat{f}\cdot\mathrm{Im}\,\alpha\rangle\cdot T^{\lambda^+_i} \\
&=& \sum_{i=1}^{s^-}\langle \Pi^i_-\,\ell_T,\hat{f}\cdot\mathrm{Im}\,\alpha\rangle\cdot T^{\lambda^-_i},
\end{eqnarray*}
and thus the deviation of ergodic averages are described by anti-invariant behavior. If $H^1(M)$ denotes the standard Sobolev space of functions on $M$, then it is clear to see that $\pi_{\kappa}^* H^1(M)\subset H^1(\hat{M})$.
The results of \cite{forni:deviation} and Theorem \ref{thm:main} imply the following.
\begin{theorem}[Deviations of ergodic averages for quadratic differentials]
\label{thm:deviationAverages}
For Lebesgue-almost all non-orientable differentials $q$ on a genus $g$ surface $M$ there is a space $\mathcal{I}_q^1(M)$ of dimension $2g+2n-2$ of distributions defined as the push-forward of the space of invariant distributions $\mathcal{I}_q^+$ on $\hat{M}$ which splits as
$$\mathcal{I}^1_q(M) = \pi_{\kappa*}\mathcal{I}^+_q(\lambda_1')\oplus\dots\oplus\pi_{\kappa*}\mathcal{I}^+_q(\lambda_{s^-}')$$
where $\dim \mathcal{I}^+_q(\lambda_i') = \mbox{multiplicity of }\lambda_i'$ for the $i^{th}$ distinct Lyapunov exponent of Kontsevich-Zorich cocycle restricted to the anti-invariant sub-bundle. Denoting by $\varphi_t$ the local flow of $\mathcal{F}^h_q$ as in (\ref{eqn:flow}), for any function $f\in H^1(M)$ such that
$$\mathcal{D}f=0\hspace{.4 in}\mbox{ for all }\hspace{.4 in}\mathcal{D}\in\pi_{\kappa*}\mathcal{I}^+_q(\lambda_1')\oplus\dots\oplus\pi_{\kappa*}\mathcal{I}^+_q(\lambda_i'),$$
and if there exists a $\mathcal{D}_{i+1}\in\pi_{\kappa*}\mathcal{I}^+_q(\lambda_{i+i}')\backslash\{0\}$ such that $\mathcal{D}_{i+1}f\neq 0$,
then, if $0<i<s^-$, for almost every $p\in M$,
$$\limsup_{T\rightarrow\infty}\frac{\log|\int_0^Tf\circ\varphi_t(p)\,dt|}{\log T}= \lambda_{i+1}'.$$
If $\mathcal{D}f=0$ for all $\mathcal{D}\in\pi_{\kappa*}\mathcal{I}^+_q$, then for any $p$ not contained in a singular leaf, 
$$\limsup_{T\rightarrow\infty}\frac{\log|\int_0^Tf\circ\varphi_t(p)\,dt|}{\log T}=0.$$
\end{theorem}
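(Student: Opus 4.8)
The plan is to transfer Forni's deviation theorem (Theorem~\ref{thm:forni}) from the orienting double cover $\hat M$ down to $M$, using the identity~(\ref{eqn:averages}) and the observation that pull-backs of functions from $M$ are automatically $\sigma$-invariant. Fix a $\hat\mu_\kappa$-generic $q\in\mathcal{Q}_\kappa$ and set $\hat q = i_\kappa(q)\in\mathcal{H}_{\hat\kappa}$, $\alpha=\sqrt{\pi_\kappa^* q}$. By Theorem~\ref{thm:main} the Kontsevich--Zorich cocycle is non-uniformly hyperbolic at $\hat q$, and $\hat q$ is Oseledets-regular, so the machinery of \cite{forni:deviation} is available at $\hat q$: the finite-dimensionality of $\mathcal{I}^1_q(\hat M)$, its splitting according to the Lyapunov spectrum of the cocycle on $H^1(\hat M;\mathbb{R})$, the representation of invariant distributions by asymptotic basic currents~(\ref{eqn:asCurrent}), and the growth estimates~(\ref{eqn:currNorm}). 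The one point needing care is that Theorem~\ref{thm:forni} is stated for Lebesgue-a.e.\ abelian differential whereas $\hat\mu_\kappa$ is singular with respect to $\nu_{\hat\kappa}$; here one invokes instead the versions of those results valid at any Oseledets-regular point of a non-uniformly hyperbolic cocycle, which is exactly the framework of \cite{giovanni:criterion} combined with Theorem~\ref{thm:main}. This is the main technical step of the proof; the remainder is bookkeeping with the involution.

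Next I would record the $\sigma$-equivariance. Since $\sigma$ is an automorphism of $\hat M$ fixing $\hat q$, it commutes with $g_t$, hence $\sigma_*$ preserves the Oseledets decomposition of $H^1(\hat M;\mathbb{R})$; consequently the whole chain of identifications invariant distributions $\leftrightarrow$ closed non-exact basic currents of order one $\leftrightarrow$ their cohomology classes is $\sigma$-equivariant, and the decomposition $\mathcal{I}^1_q(\hat M)=\mathcal{I}^+_q\oplus\mathcal{I}^-_q$ is compatible with the Lyapunov splitting. Because $[\alpha]\in H^1_-(\hat M,\hat\Sigma_\kappa;\mathbb{R})$ is anti-invariant, a distribution $\mathcal{D}=C\wedge\mathrm{Im}\,[\alpha]$ lies in $\mathcal{I}^\pm_q$ exactly when $C\in\mathcal{B}^\mp_q$; hence $\mathcal{I}^+_q$ carries the exponents $\lambda_i^-$ of the anti-invariant sub-bundle, splitting as $\mathcal{I}^+_q=\bigoplus_i\mathcal{I}^+_q(\lambda_i^-)$ with $\dim\mathcal{I}^+_q(\lambda_i')=$ multiplicity of $\lambda_i'$, while $\mathcal{I}^-_q$ carries the exponents $\lambda_i^+$ of the invariant sub-bundle. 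The stated dimension and splitting of $\mathcal{I}^1_q(M):=\pi_{\kappa*}\mathcal{I}^+_q$ then follow from Forni's dimension count for $\mathcal{I}^1_q(\hat M)$ together with this decomposition, once one notes that $\pi_{\kappa*}$ is injective on $\mathcal{I}^+_q$: a $\sigma$-invariant distribution is determined by its values on $\sigma$-invariant functions, and these are precisely the pull-backs $\pi_\kappa^* f$, so $(\pi_{\kappa*}\mathcal{D})(f)=\mathcal{D}(\pi_\kappa^* f)=0$ for all $f$ forces $\mathcal{D}=0$.

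Finally I would carry out the transfer. For $f\in H^1(M)$ put $\hat f=\pi_\kappa^* f\in H^1(\hat M)$, a $\sigma$-invariant function. Every $\mathcal{D}\in\mathcal{I}^-_q$ is $\sigma$-anti-invariant, so $\mathcal{D}(\hat f)=(\sigma_*\mathcal{D})(\hat f)=-\mathcal{D}(\hat f)$, i.e.\ $\mathcal{D}(\hat f)=0$ automatically. Therefore, via $(\pi_{\kappa*}\mathcal{D})(f)=\mathcal{D}(\hat f)$, the hypothesis that $f$ be annihilated by $\pi_{\kappa*}\mathcal{I}^+_q(\lambda_1')\oplus\cdots\oplus\pi_{\kappa*}\mathcal{I}^+_q(\lambda_i')$ is equivalent to $\hat f$ being annihilated by $\mathcal{I}^-_q$ together with the first $i$ summands of $\mathcal{I}^+_q$, i.e.\ by the first $i$ pieces of the full decomposition of $\mathcal{I}^1_q(\hat M)$ ordered by decreasing exponent; likewise, existence of a nonzero $\mathcal{D}_{i+1}\in\pi_{\kappa*}\mathcal{I}^+_q(\lambda_{i+1}')$ with $\mathcal{D}_{i+1}f\neq0$ translates to the corresponding statement for $\hat f$ on $\hat M$. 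Using~(\ref{eqn:averages}), $\int_0^T f\circ\varphi_s(p)\,ds=\int_0^T\hat f\circ\hat\varphi_s(\hat p)\,ds$ for any $\hat p\in\pi_\kappa^{-1}(p)$, and $p$ lies on a singular leaf of $\mathcal{F}_q$ iff $\hat p$ lies on a singular leaf of $\hat{\mathcal{F}}_q$; applying Theorem~\ref{thm:forni} on $\hat M$ to $\hat f$ and $\hat p$ then yields $\limsup_{T\to\infty}\frac{\log|\int_0^T f\circ\varphi_s(p)\,ds|}{\log T}=\lambda_{i+1}'$ for a.e.\ $p$ when $0<i<s^-$, and $=0$ when $\hat f$ (equivalently $f$) is annihilated by all of $\pi_{\kappa*}\mathcal{I}^+_q$ and $p$ is not on a singular leaf. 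I expect the only genuine obstacle to be the first step — confirming that Forni's deviation apparatus, literally proved in \cite{forni:deviation} for Lebesgue-generic differentials, operates at the non-Lebesgue-generic points $i_\kappa(q)$ — and this is resolved by appealing to the more general formulation underlying \cite{giovanni:criterion} once non-uniform hyperbolicity has been secured by Theorem~\ref{thm:main}.
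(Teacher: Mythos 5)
Your approach is essentially the paper's: lift the ergodic integral to the orienting cover via (\ref{eqn:averages}), observe that anti-invariant distributions annihilate the $\sigma$-invariant function $\hat f = \pi_\kappa^* f$, so that only $\mathcal{I}^+_q$ (carrying the anti-invariant exponents $\lambda^-_i$ via the identification $\mathcal{D}=C\wedge\mathrm{Im}\,[\alpha]$, $\mathcal{D}\in\mathcal{I}^\pm_q \Leftrightarrow C\in\mathcal{B}^\mp_q$) contributes, and then apply Forni's deviation machinery on $\hat M$. You are rather more explicit than the paper on two points it glosses over — that the apparatus of \cite{forni:deviation} applies at Oseledets-regular points of the singular measure $\hat\mu_\kappa$ rather than only Lebesgue-generic ones, and that $\pi_{\kappa*}$ is injective on $\mathcal{I}^+_q$ so the stated dimension count survives the push-forward — but the underlying argument is the same.
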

It is a consequence of a result of Masur and Smillie \cite{Masur-Smillie} that the anti-invariant sub-bundle can be arbitrarily large for a fixed genus $g$ surface. Consequently, by the above theorem, there are non-orientable foliations for which the space of invariant distributions $\mathcal{I}_q^1(M)$ can have arbitrarily large dimension and the deviation of ergodic averages are described by arbitrarily many parameters.

By (\ref{eqn:commute}), the Kontsevich-Zorich cocycle over $\mathcal{Q}_\kappa$ describes only the Lyapunov exponents of the invariant sub-bundle over $i_\kappa(\mathcal{Q}_\kappa)\subset \mathcal{H}_{\hat{\kappa}}$. Thus, by the above theorem, there seems to be no \emph{a-priori} reason for the Lyapunov exponents of the cocycle over $\mathcal{Q}_\kappa$ to describe the deviation of averages of functions along leaves of the foliation: only if there is repetition of exponents across the invariant and anti-invariant sub-bundles does the cocycle over $\mathcal{Q}_\kappa$ describe the deviation behavior of ergodic integrals.

\appendix
\section{Approximating the Lyapunov exponents numerically}

The Kontsevich-Zorich cocycle is a continuous-time version of a discrete, matrix-valued cocycle, the Rauzy-Veech-Zorich cocycle. Thus one can try to numerically compute the Lyapunov exponents for this discrete cocycle. In fact, this was how Zorich originally conjectured a simple spectrum for the case of Abelian differentials. We will not go into details behind the discrete theory of (half-)translation surfaces, that of interval exchange transformations, zippered rectangles, Rauzy-Veech induction, Zorich acceleration, generalized permutations, et cetera. We have written this section assuming the reader is acquainted with these concepts. We will give references for the unfamiliar but interested reader. 

The language of generalized permutations \cite{BoissyLanneau} is the right discrete language in which to study the dynamics of the discrete cocycle on a surface carrying a non-orientable quadratic differential. Not surprisingly, one can pass to the orienting double cover and study the dynamics of the Rauzy-Veech-Zorich cocycle for an interval exchange transformation through analogues of the already-developed tools for interval exchange transformations. The concept of \emph{interval exchange transformation with involution}, first introduced in \cite{AvilaResende}, is the right analogue of interval exchange transformations for Abelian differentials which are the pull-back of non-orientable ones. Although the explicit connection between generalized permutations and interval exchange transformations with involution, as well as explicit expressions for all the cocycles involved on the orienting cover, are not found in the literature, it is not hard to work them out from \cite{BoissyLanneau} and \cite{AvilaResende}. Having computed the matrix-valued cocycle expressions for the interval exchange transformations with involution, we have approximated the Lyapunov exponents for such cocycles numerically, following \cite[\S V.C]{eckmann-ruelle}. 

Below is a table of all the strata of quadratic differentials for which the Lyapunov exponents were approximated numerically. Recall that we always have $\lambda_1^- = 1$. According to \cite{lanneau:connected}, some strata are not connected and in some cases we have computed the exponents for different components of such strata. Note that the result for $\mathcal{Q}(2,-1,-1)$ has actually been proved in \cite[Theorem 1.7]{bainbridge}. The results for all strata examined suggest a simple spectrum, so we conjecture that this is true for $\hat{\mu}_\kappa$-almost all quadratic differentials for any singularity pattern $\kappa$.

\newpage

\begin{tabular}{|l|l|l|l|}
\hline
Stratum&Geni&Invariant Exponents& Anti-Invariant Exponents\\
\hline
$\mathcal{Q}(2,-1,-1)$&$g = 1,\, \hat{g} = 2$&$\lambda_1^+ = \frac{1}{2}$&$\lambda_1^- = 1$\\
\hline
$\mathcal{Q}(2,1,-1^3)$&$g = 1,\, \hat{g} = 3$&$\lambda_1^+ = \frac{1}{2}$&$\lambda_2^- = \frac{1}{3}$\\
\hline
&$g = 3,\, \hat{g} = 5$&$\lambda_1^+ = 0.660189$&$\lambda_2^- = 0.2000206$\\
$\mathcal{Q}(8)$&&$\lambda_2^+ = 0.3973745$& \\
&&$\lambda_3^+ = 0.142043$& \\
\hline 
&$g = 3,\,\hat{g} = 4$&$\lambda_1^+ = 0.778654$&$\lambda_2^- = 0.551526333$ \\
$\mathcal{Q}(-1,3,3,3)^{adj}$&&$\lambda_2^+ = 0.47222$& $\lambda_3^- = 0.233913333$ \\
&&$\lambda_3^+ = 0.229875$&$\lambda_4^- = 0.097543$ \\
\hline
&$g = 3,\,\hat{g} = 4$&$\lambda_1^+ = 0.597168$&$\lambda_2^- = 0.327950333$ \\
$\mathcal{Q}(-1,3,3,3)^{irr}$&&$\lambda_2^+ = 0.402619$& $\lambda_3^- = 0.190083$ \\
&&$\lambda_3^+ = 0.200314$&$\lambda_4^- = 0.083007333$ \\
\hline 
&$g = 3,\,\hat{g} = 3$&$\lambda_1^+ = 0.601297$&$\lambda_2^- = 0.30827666$ \\
$\mathcal{Q}(-1,3,6)^{adj}$&&$\lambda_2^+ = 0.3795885$& $\lambda_3^- = 0.1406165$ \\
&&$\lambda_3^+ = 0.1677125$&\\
\hline 
&$g = 3,\,\hat{g} = 3$&$\lambda_1^+ = 0.767285$&$\lambda_2^- = 0.524996$ \\
$\mathcal{Q}(-1,3,6)^{irr}$&&$\lambda_2^+ = 0.445894$& $\lambda_3^- = 0.17866075$ \\
&&$\lambda_3^+ = 0.190788$&\\
\hline
&$g = 3,\,\hat{g} = 3$&$\lambda_1^+ = 0.607201$&$\lambda_2^- = 0.281791$ \\
$\mathcal{Q}(-1,9)^{adj}$&&$\lambda_2^+ = 0.346005$& $\lambda_3^- = 0.080341$ \\
&&$\lambda_3^+ = 0.135734$&\\
\hline 
&$g = 3,\,\hat{g} = 3$&$\lambda_1^+ = 0.742725$&$\lambda_2^- = 0.4617$ \\
$\mathcal{Q}(-1,9)^{irr}$&&$\lambda_2^+ = 0.3902795$& $\lambda_3^- = 0.082813$ \\
&&$\lambda_3^+ = 0.139563$&\\
\hline
&$g = 4,\,\hat{g} = 7$&$\lambda_1^+ = 0.6639145$&$\lambda_2^- = 0.303482$\\
$\mathcal{Q}(12)^{I}$&&$\lambda_2^+ = 0.45256$&$\lambda_3^- = 0.119673$\\
&&$\lambda_3^+ = 0.2278785$& \\
&&$\lambda_4^+ = 0.089465$& \\
\hline
&$g = 4,\,\hat{g} = 7$&$\lambda_1^+ = 0.7476805$&$\lambda_2^- = 0.443258$\\
$\mathcal{Q}(12)^{II}$&&$\lambda_2^+ = 0.49137$&$\lambda_3^- = 0.12827975$\\
&&$\lambda_3^+ = 0.2437355$& \\
&&$\lambda_4^+ = 0.0893735$& \\
\hline
&$g = 3,\,\hat{g} = 2$&$\lambda_1^+ = 0.704425$&$\lambda_2^- = 0.33313725$ \\
$\mathcal{Q}(4,4)$&&$\lambda_2^+ = 0.4367675$& \\
&&$\lambda_3^+ = 0.1917245$&\\
\hline
$\mathcal{Q}(-1^2,1^2)$&$g = 1,\, \hat{g} = 3$&$\lambda_1^+ = \frac{2}{3}$&$\lambda_2^- = \frac{1}{3}$\\
\hline
$\mathcal{Q}(-1^3,1^3)$&$g = 1,\, \hat{g} = 4$&$\lambda_1^+ = 0.5449135$&$\lambda_2^- = 0.369280333$\\
&&&$\lambda_3^- = 0.176435$\\
\hline
$\mathcal{Q}(-1^4,1^4)$&$g = 1,\, \hat{g} = 5$&$\lambda_1^+ = 0.4768945$&$\lambda_2^- = 0.425535333$\\
&&&$\lambda_3^- = 0.261043333$\\
&&&$\lambda_4^- = 0.12274366$\\
\hline
$\mathcal{Q}(-1^5,5)$&$g = 1,\, \hat{g} = 4$&$\lambda_1^+ = 0.2841625$&$\lambda_2^- = 0.444530333$\\
&&&$\lambda_3^- = 0.1261055$ \\
\hline
$\mathcal{Q}(-1,2,3)$&$g = 2,\, \hat{g} = 4$&$\lambda_1^+ = 0.5829105$&$\lambda_2^- = 0.19990875$\\
&&$\lambda_2^+ = 0.3171165$&\\
\hline
\end{tabular}

\bibliographystyle{amsalpha}
\bibliography{biblio}
\end{document}